\newtheorem{theorem}{Theorem}
\newtheorem{condition}[theorem]{Condition}
\newtheorem{definition}[theorem]{Definition}
\newtheorem{lemma}[theorem]{Lemma}
\newtheorem{proposition}[theorem]{Proposition}
\newtheorem{remark}[theorem]{Remark}
\newenvironment{proof}[1][Proof]{\noindent\textbf{#1.} }{\ \rule{0.5em}{0.5em}}
\begin{document}

\title{Noise prevents infinite stretching of the passive field in a
stochastic vector advection equation}
\author{F. Flandoli, M. Maurelli, M. Neklyudov}
\maketitle

\begin{abstract}
A linear stochastic vector advection equation is considered; the equation
may model a passive magnetic field in a random fluid. When the driving
velocity field is rough but deterministic, in particular just H\"{o}lder
continuous and bounded, one can construct examples of infinite stretching of
the passive field, arising from smooth initial conditions. The purpose of
the paper is to prove that infinite stretching is prevented if the driving
velocity field contains in addition a white noise component.
\end{abstract}

\section{Introduction}

Consider the linear stochastic vector advection equation in $\mathbb{R}^{3}$%
: 
\begin{equation}
d\mathbf{B}+\mathrm{curl}({\mathbf{v}}\times \mathbf{B})dt+\sigma
\sum_{k=1}^{3}\mathrm{curl}(\mathbf{e}_{k}\times \mathbf{B})\circ dW^{k}=0,
\label{vectadv}
\end{equation}%
where {$\mathbf{v}$}$:\left[ 0,T\right] \times \mathbb{R}^{3}\rightarrow 
\mathbb{R}^{3}$ is a given divergence-free vector field, the solution $%
\mathbf{B}$ is a divergence-free vector field, $\mathbf{e}_{1},\mathbf{e}%
_{2},\mathbf{e}_{3}$ is the canonical basis of $\mathbb{R}^{3}$, $\mathbf{W}%
=\left( W^{1},W^{2},W^{3}\right) $ is a Brownian motion in $\mathbb{R}^{3}$, 
$\sigma $ is a real number. The initial condition, at time $t=0$, will be
denoted by $\mathbf{B}_{0}$. The driving vector field (the velocity field of
the fluid, in the usual interpretation) is modeled by the Gaussian field%
\begin{equation*}
{\mathbf{v}}+\sigma \sum_{k=1}^{3}\mathbf{e}_{k}\frac{dW^{k}}{dt}={\mathbf{v}%
}+\sigma \frac{d\mathbf{W}}{dt}
\end{equation*}%
where {$\mathbf{v}$} is deterministic, a sort of average or slow-varying
component, and $\sigma d\mathbf{W}$\ is the fast-varying random component,
white noise in time. This equation may model a passive vector field $\mathbf{%
B}$, like a magnetic field, in a turbulent fluid with a non-trivial average
component {$\mathbf{v}$}. The intensity $\sigma $ of the noise can be
arbitrarily small, in the sequel, to model real situations when the noise
(which always exists) is usually neglected in first approximation. However,
the trajectories of $\mathbf{W}$\ are only H\"{o}lder continuous with
exponent smaller than $\frac{1}{2}$ and not differentiable at any point, so
that the impulses given by the term $\sigma \frac{d\mathbf{W}}{dt}$ are
small when cumulated in time ($\sigma \mathbf{W}$) but istantaneously very
strong. We aim at studying existence, uniqueness, representation formula and
regularity under low regularity assumption on {$\mathbf{v}$}.

The key point of this work is the fact that the noise prevents blow-up,
under assumptions on {$\mathbf{v}$} such that blow-up may occur in the
deterministic case. When $\sigma =0$, we give an example of H\"{o}lder
continuous vector field {$\mathbf{v}$} such that infinite values of $\mathbf{%
B}$ arise in finite time from a bounded continuous initial field $\mathbf{B}%
_{0}$;\ then we prove that H\"{o}lder continuity and boundedness of {$%
\mathbf{v}$} is sufficient, in the stochastic case ($\sigma \neq 0$), to
prove that continuous initial field $\mathbf{B}_{0}$ produces continuous
fields $\mathbf{B}_{t}$ for all $t\geq 0$. The singularity in the
deterministic case is associated to infinite stretching of $\mathbf{B}$;\
randomness prevents stretching to blow-up to infinity. Precisely, we prove
(see the notations below):

\begin{theorem}
\label{Thm 1}i) For $\sigma =0$, there exists {$\mathbf{v}$}$\in
C_{b}^{\alpha }(\mathbb{R}^{3};\mathbb{R}^{3})$ and $\mathbf{B}_{0}\in
C^{\infty }(\mathbb{R}^{3};\mathbb{R}^{3})$ such that $\sup_{\left\vert
x\right\vert \leq 1}\left\vert \mathbf{B}\left( t,x\right) \right\vert
=+\infty $ for all $t>0$.

ii) For $\sigma \neq 0$, for all {$\mathbf{v}$}$\in C([0,T];C_{b}^{\alpha }(%
\mathbb{R}^{3};\mathbb{R}^{3}))$ and $\mathbf{B}_{0}\in C(\mathbb{R}^{3};%
\mathbb{R}^{3})$ one has $\mathbf{B}\in C([0,T]\times \mathbb{R}^{3};\mathbb{%
R}^{3})$, with probability one.
\end{theorem}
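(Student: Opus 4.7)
The plan is to handle the two parts separately, since they rely on essentially disjoint ideas: an explicit construction for (i), and a regularization-by-noise argument for (ii).

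For part (i), I would exhibit an explicit shear-type example. Take $\mathbf{v}(x)=(f(x_2),0,0)$, where $f\in C_b^\alpha(\mathbb{R})$ coincides with $y\mapsto|y|^\alpha\mathrm{sgn}(y)$ in a neighborhood of $0$ and is modified outside to remain bounded. Then $\mathbf{v}\in C_b^\alpha(\mathbb{R}^3;\mathbb{R}^3)$ with $\mathrm{div}\,\mathbf{v}=0$, its deterministic flow is explicit, $\phi_t(y)=(y_1+tf(y_2),y_2,y_3)$, and the $(1,2)$-entry of the Jacobian equals $tf'(y_2)=t\alpha|y_2|^{\alpha-1}$ for $y_2$ near $0$, which diverges as $y_2\to 0$. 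Choosing $\mathbf{B}_0\equiv\mathbf{e}_2$ (smooth and divergence-free) and applying the classical Lagrangian representation $\mathbf{B}(t,\phi_t(y))=D\phi_t(y)\mathbf{B}_0(y)$ gives $|\mathbf{B}(t,\phi_t(y))|\geq t\alpha|y_2|^{\alpha-1}$. Picking $y=(0,\varepsilon,0)$ with $\varepsilon\to 0$ keeps $\phi_t(y)$ inside the unit ball for $t$ small, yielding $\sup_{|x|\leq 1}|\mathbf{B}(t,x)|=+\infty$ for every such $t>0$.

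For part (ii), introduce the stochastic characteristic flow $\phi_t$ solving
\[ dX_t=\mathbf{v}(t,X_t)\,dt+\sigma\,dW_t,\qquad X_0=x, \]
and proceed in two steps. First, the regularization-by-noise theory for SDEs with bounded H\"{o}lder drift and additive non-degenerate Brownian noise (in the spirit of Flandoli--Gubinelli--Priola and Fedrizzi--Flandoli) implies that $\phi_t$ is, almost surely, a $C^{1,\beta}$ stochastic flow of diffeomorphisms, jointly continuous in $(t,x)$, with $D\phi_t$ and $D\phi_t^{-1}$ locally bounded uniformly on $[0,T]$. This is the crucial improvement over the deterministic case: H\"{o}lder regularity of $\mathbf{v}$ combined with noise suppresses the Jacobian blow-up exhibited in (i). Second, I would establish the Lagrangian representation $\mathbf{B}(t,\phi_t(x))=D\phi_t(x)\mathbf{B}_0(x)$. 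In the smooth setting this follows from the Stratonovich chain rule, using $\mathrm{curl}(\mathbf{v}\times\mathbf{B})=(\mathbf{v}\cdot\nabla)\mathbf{B}-(\mathbf{B}\cdot\nabla)\mathbf{v}$ under the divergence-free conditions and the fact that the constant-in-space Stratonovich noise only adds a translation to the characteristic. For rough $\mathbf{v}$ one mollifies, uses the formula for the regularized $\mathbf{v}^\varepsilon$, and passes to the limit via stability of the SDE flow and uniqueness of weak solutions to the SPDE. Combining the two steps, $\mathbf{B}(t,y)=D\phi_t(\phi_t^{-1}(y))\,\mathbf{B}_0(\phi_t^{-1}(y))$ is a composition of jointly continuous maps, whence $\mathbf{B}\in C([0,T]\times\mathbb{R}^3;\mathbb{R}^3)$ almost surely.

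The hard part is the rigorous justification of the representation formula at this low regularity, which I expect splits into two technical obstacles: (a) obtaining $C^{1,\beta}_{\mathrm{loc}}$ bounds on $\phi^\varepsilon_t$ that are uniform in the mollification parameter $\varepsilon$, the heart of the regularization-by-noise mechanism and typically produced through Zvonkin-type transformations and Schauder estimates for the associated Kolmogorov equation; and (b) selecting a solution class for the SPDE in which weak solutions are unique, so that the mollified identity can be genuinely passed to the limit. Once these are in place, the continuity statement follows directly from the joint continuity of the flow and its derivative.
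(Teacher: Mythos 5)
Your overall architecture coincides with the paper's: an explicit singular flow for (i), and the Lagrangian representation through the regularized stochastic flow for (ii). For part (i), though, you take a genuinely different and more elementary route. The paper uses a rotational field $\mathbf{v}=r^{\alpha}e_{\theta}$, solves the transport/stretching system explicitly in cylindrical coordinates, and exhibits blow-up of the angular component $B_{\theta}\sim -(1-\alpha)r^{\alpha-1}t\,B_{r}^{0}$ at the $z$-axis; your shear flow concentrates the singularity on the plane $\{x_{2}=0\}$ and makes the flow and its Jacobian one-line computations. Your example does work: with $\mathbf{B}_{0}=\mathbf{e}_{2}$ one checks directly that $\mathbf{B}(t,x)=(tf^{\prime }(x_{2}),1,0)$ solves $\partial _{t}\mathbf{B}+(\mathbf{v}\cdot \nabla )\mathbf{B}-(\mathbf{B}\cdot \nabla )\mathbf{v}=0$ on $\{x_{2}\neq 0\}$ and diverges like $t\alpha |x_{2}|^{\alpha -1}$. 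Two small repairs: justify the representation formula by this direct verification on $\{x_{2}\neq 0\}$, where $\mathbf{v}$ is smooth (invoking the ``classical Lagrangian representation'' presupposes a differentiable flow, which fails exactly on the singular set), and note that the conclusion holds for \emph{every} $t>0$, not only small $t$, since for fixed $t$ one has $\phi _{t}(0,\varepsilon ,0)\rightarrow 0$ as $\varepsilon \rightarrow 0$.

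For part (ii) your plan is exactly the paper's plan, and your obstacle (a) is precisely Theorem \ref{flowdiffeo}, imported from Flandoli--Gubinelli--Priola. The genuine gap is obstacle (b): you state that one must ``select a solution class in which weak solutions are unique'' but give no argument, and this is where the paper's real technical work lies (Lemma \ref{lemma uniqueness}). The difficulty is specific to the vector equation: the stretching term $(\mathbf{B}\cdot \nabla )\mathbf{v}$ involves $D\mathbf{v}$, which for merely H\"{o}lder $\mathbf{v}$ is only a distribution, so the DiPerna--Lions/FGP commutator estimates one would naturally reach for do not close. The paper's substitute is a cancellation in the spirit of Neves--Olivera: one differentiates $(D\Phi _{t}^{\epsilon }(x))^{-1}\mathbf{B}^{\epsilon }(t,\Phi _{t}^{\epsilon }(x))$, tests against $\varphi $, changes variables, and integrates by parts so that only the \emph{antisymmetric} part $(D\psi ^{\epsilon })^{A}$ of the derivative of $\psi ^{\epsilon }=(D(\Phi ^{\epsilon })^{-1})^{T}\varphi ((\Phi ^{\epsilon })^{-1})$ survives; the dangerous second derivatives of the flow drop out of $(D\psi ^{\epsilon })^{A}$ by symmetry of mixed partials, leaving only first derivatives, which Theorem \ref{flowdiffeo} controls. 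Without this device (or an equivalent one) the passage to the limit in the mollified identity does not go through; and without uniqueness, the assertion that $\mathbf{B}$ is continuous concerns only the particular solution built from the flow, not \emph{the} solution, so the no-blow-up statement would remain incomplete.
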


Clearly, linear vector advection equation is a very idealized model of fluid
dynamics but this result opens the question whether noise may prevent
blow-up of the vorticity field of 3D Euler equations. The emergence of
singularity seems to require a certain degree of organization of the fluid
structures and perhaps this organization is lost, broken, under the
influence of randomness. With further degree of speculation, one could even
think that a turbulent regime may contain the necessary degree of randomness
to prevent blow-up; if so, singularities of the vorticity could more likely
be associated to strong transient phases, instead of established turbulent
ones.

From the mathematical side, this is not the first result of this nature, see 
\cite{FedriFla}, \cite{BFGM}, \cite{FGP-Eulero}, \cite{DelarueFlaVincenzi},
and also \cite{FlaGubPri}, \cite{Maure}, \cite{BFM PAMS}, \cite{Fla Saint
Flour}, \cite{GubiJara}, \cite{NevesOliv} for uniqueness of weak solutions
due to noise (the other face of the celebrated open problem presented by 
\cite{Fefferman}). However, these papers deal with scalar problems, like
linear transport equations, linear continuity equations, vorticity in 2D
Euler equations, 1D Vlasov-Poisson equations. The result of the present
paper is the first one dealing with vector valued PDEs like 3D Euler
equations;\ the kind of singularity in the vectorial case is different,
related to rotations and stretching instead of shocks or mass concentration.
Several new technical difficulties arise due to the vectorial nature of the
equation (for instance, the proof of uniqueness of non-regular solutions,
Lemma \ref{lemma uniqueness}, usually involving commutator estimates, here
is more difficult and is obtained by special cancellations, also inspired to 
\cite{NevesOliv}). Let us mention also the improvement of well-posedness due
to noise proved for dispersive equations, \cite{DT10}, \cite{ChoukGubi}. In
all the works mentioned so far the noise is multiplicative, and often of
transport type like in the present paper. The role of additive noise in
preventing singularities is more obscure. For uniqueness under poor drift,
additive noise is very powerful see \cite{V}, \cite{Zambotti}, \cite{Gyongy}%
, \cite{DaPrato Fla}, \cite{DFPR}; however, its relevance in fluid dynamics
is still under investigation. See \cite{DaPDeb}, \cite{FR Markov}, \cite%
{FlCime}, \cite{Romito} for partial results.

For additional details on vector advection equations see for instance \cite%
{BrzNekl} and references therein. For advanced results on the
differentiability of stochastic flow generated by rough drift (key
ingredient of the representation formula (\ref{reprform})), see \cite{AryPil}%
, \cite{BFGM}, \cite{FedriFla}, \cite{MohNilPro}. For a general reference on
passive advection driven by random velocity fields see \cite{FalGawVerg},
where also the case of a passive magnetic field is discussed; the structure
of the noise term in the present work is very simplified with respect to 
\cite{FalGawVerg} but the point here is to prove that noise has a depleting
effect on $\mathbf{B}$ and this fact is true also under this simple noise;
generalization to space-homogeneous noise with more complex space structure
is possible, if $Q\left( 0\right) $, the covariance matrix at $x=0$, is
non-degenerate.

The model described here is clearly too idealized for a direct interest in
fluid dynamics but once the phenomenon of depletion of stretching is
rigorously proved in this particular framework, there is more motivation to
investigate generalizations which could become closer to reality. One of
them would be the case when {$\mathbf{v}$} contains (also just small) high
frequency fluctuations, although not being white noise. This extension looks
very difficult but potentially not impossible.

\subsection{Notations}

We denote by $C(\mathbb{R}^{3};\mathbb{R}^{3})$ (resp. $C^{\infty }(\mathbb{R%
}^{3};\mathbb{R}^{3})$) the space of all continuous (resp. infinitely
differentiable) vector fields $\mathbf{v}:\mathbb{R}^{3}\rightarrow \mathbb{R%
}^{3}$. We denote by $C_{b}(\mathbb{R}^{3};\mathbb{R}^{3})$ the space of all 
$\mathbf{v}\in C(\mathbb{R}^{3};\mathbb{R}^{3})$ such that $\left\Vert 
\mathbf{v}\right\Vert _{0}:=\sup_{x\in \mathbb{R}^{3}}\left\vert \mathbf{v}%
\left( x\right) \right\vert <\infty $. For any $\alpha \in \left( 0,1\right) 
$ we denote by $C_{b}^{\alpha }(\mathbb{R}^{3};\mathbb{R}^{3})$ the space of
all $\mathbf{v}\in C_{b}(\mathbb{R}^{3};\mathbb{R}^{3})$ such that $\left[ 
\mathbf{v}\right] _{\alpha }:=\sup_{x,y\in \mathbb{R}^{3},x\neq y}\frac{%
\left\vert \mathbf{v}\left( x\right) -\mathbf{v}\left( y\right) \right\vert 
}{\left\vert x-y\right\vert ^{\alpha }}<\infty $; the space $C_{b}^{\alpha }(%
\mathbb{R}^{3};\mathbb{R}^{3})$ is endowed with the norm $\left\Vert \mathbf{%
v}\right\Vert _{\alpha }=\left\Vert \mathbf{v}\right\Vert _{0}+\left[ 
\mathbf{v}\right] _{\alpha }$. We denote by $C_{c}^{\infty }(\mathbb{R}^{3};%
\mathbb{R}^{3})$ the space of all $\mathbf{v}\in C^{\infty }(\mathbb{R}^{3};%
\mathbb{R}^{3})$ which have compact support.

For $p\geq 1$, we denote by $L_{loc}^{p}(\mathbb{R}^{3};\mathbb{R}^{3})$ the
space of measurable vector fields $\mathbf{v}:\mathbb{R}^{3}\rightarrow 
\mathbb{R}^{3}$ such that $\int_{\left\vert x\right\vert \leq R}\left\vert 
\mathbf{v}\left( x\right) \right\vert ^{p}dx<\infty $ for all $R>0$; we
write $\mathbf{v}\in L^{p}(\mathbb{R}^{3};\mathbb{R}^{3})$ when $\int_{%
\mathbb{R}^{3}}\left\vert \mathbf{v}\left( x\right) \right\vert
^{p}dx<\infty $. The notation $\left\langle {\mathbf{v}},{\mathbf{w}}%
\right\rangle $ stands for $\int_{\mathbb{R}^{3}}{\mathbf{v}}\left( x\right)
\cdot {\mathbf{w}}\left( x\right) dx$, when ${\mathbf{v}},{\mathbf{w\in }}%
L^{2}(\mathbb{R}^{3};\mathbb{R}^{3})$.

If ${\mathbf{v}}:\left[ 0,T\right] \times \mathbb{R}^{3}\rightarrow \mathbb{R%
}^{3}$, we usually write ${\mathbf{v}}\left( t,x\right) $, but also $\mathbf{%
v}_{t}$ to denote the function $x\mapsto {\mathbf{v}}\left( t,x\right) $ at
given $t\in \left[ 0,T\right] $.

If ${\mathbf{v}}\in \mathbb{R}^{3}$ we write ${\mathbf{v}}\cdot \nabla $ for
the differential operator $\sum_{i=1}^{3}v^{i}\partial _{x_{i}}$. If ${%
\mathbf{v}},\mathbf{B}:\mathbb{R}^{3}\rightarrow \mathbb{R}^{3}$ the
notation $\left( {\mathbf{v}}\cdot \nabla \right) \mathbf{B}$ stands for the
vector field with components $\left( {\mathbf{v}}\cdot \nabla \right) B^{i}$%
. Similarly, we interpret componentwise operations like $\partial _{k}%
\mathbf{B}$, $\Delta \mathbf{B}$.

\section{Example of blow-up in the deterministic case\label{section 2}}

In this section we consider equation (\ref{vectadv}) in the deterministic
case $\sigma =0$. We give an example of H\"{o}lder continuous bounded vector
field {$\mathbf{v}$} such that $\sup_{\left\vert x\right\vert \leq
1}\left\vert \mathbf{B}\left( t,x\right) \right\vert =+\infty $ for all $t>0$%
, although $\sup_{x\in \mathbb{R}^{3}}\left\vert \mathbf{B}_{0}\left(
x\right) \right\vert <\infty $ and $\mathbf{B}_{0}$\ is smooth.

Let us also remark that, on the contrary, when {$\mathbf{v}$} is of class $%
C\left( \left[ 0,T\right] ;C_{b}^{1}(\mathbb{R}^{3};\mathbb{R}^{3})\right) $%
, for every $\mathbf{B}_{0}\in C(\mathbb{R}^{3};\mathbb{R}^{3})$ there
exists a unique continuous weak solution $\mathbf{B}$ (the definition is
analogous to Definition \ref{def distrib sol} below and the proof is similar
to the one of Theorem \ref{noblowup}); it satisfies identity (\ref{reprform}%
) below where $\Phi _{t}(x)$ is the deterministic flow given by the equation
of characteristics%
\begin{equation*}
\frac{d}{dt}\Phi _{t}(x)={\mathbf{v}}(t,\Phi _{t}(x)),\qquad \Phi _{0}(x)=x.
\end{equation*}%
When {$\mathbf{v}$} is of class $C\left( \left[ 0,T\right] ;C_{b}^{2}(%
\mathbb{R}^{3};\mathbb{R}^{3})\right) $ and $\mathbf{B}_{0}\in C^{1}(\mathbb{%
R}^{3};\mathbb{R}^{3})$ the solution $\mathbf{B}$ is of class $C\left( \left[
0,T\right] ;C^{1}(\mathbb{R}^{3};\mathbb{R}^{3})\right) $, and so on, from
identity (\ref{reprform}). The idea of the example of blow-up comes from
identity (\ref{reprform}): one has to construct a flow $\Phi _{t}(x)$,
corresponding to a vector field {$\mathbf{v}$} less regular than $C\left( %
\left[ 0,T\right] ;C_{b}^{1}(\mathbb{R}^{3};\mathbb{R}^{3})\right) $, such
that $D\Phi _{t}(x)$ blows-up at some point.

\subsection{Preliminaries on cylindrical coordinates}

Limited to this and next subsection, we denote points of $\mathbb{R}^{3}$ by 
$\left( x,y,z\right) $ instead of $x$ (and analogous notations for Euclidea
coordinates). Let us recall that the material derivative, in cylindrical
coordinates, for vectors $\mathbf{A}=\mathbf{A}(r,\theta ,z)$, $\mathbf{B}=%
\mathbf{B}(r,\theta ,z)$, $\mathbf{A}=A_{r}e_{r}+A_{\theta }e_{\theta
}+A_{z}e_{z}$, $\mathbf{B}=B_{r}e_{r}+B_{\theta }e_{\theta }+B_{z}e_{z}$
(where $e_{r}=\frac{x}{r}e_{x}+\frac{y}{r}e_{y}$, $e_{\theta }=-\frac{y}{r}%
e_{x}+\frac{x}{r}e_{y}$) are given by the formula 
\begin{align}
\left( {\mathbf{A}}\cdot \nabla \right) \mathbf{B}& =(A_{r}\frac{\partial
B_{r}}{\partial r}+\frac{A_{\theta }}{r}\frac{\partial B_{r}}{\partial
\theta }+A_{z}\frac{\partial B_{r}}{\partial z}-\frac{A_{\theta }B_{\theta }%
}{r})e_{r}  \notag \\
& +(A_{r}\frac{\partial B_{\theta }}{\partial r}+\frac{A_{\theta }}{r}\frac{%
\partial B_{\theta }}{\partial \theta }+A_{z}\frac{\partial B_{\theta }}{%
\partial z}+\frac{A_{\theta }B_{r}}{r})e_{\theta }  \notag \\
& +(A_{r}\frac{\partial B_{z}}{\partial r}+\frac{A_{\theta }}{r}\frac{%
\partial B_{z}}{\partial \theta }+A_{z}\frac{\partial B_{z}}{\partial z}%
)e_{z}  \notag
\end{align}%
Consequently, 
\begin{align}
\left( {\mathbf{A}}\cdot \nabla \right) \mathbf{B}& -\left( {\mathbf{B}}%
\cdot \nabla \right) \mathbf{A}=(A_{r}\frac{\partial B_{r}}{\partial r}-B_{r}%
\frac{\partial A_{r}}{\partial r}+\frac{A_{\theta }}{r}\frac{\partial B_{r}}{%
\partial \theta }-\frac{B_{\theta }}{r}\frac{\partial A_{r}}{\partial \theta 
}+A_{z}\frac{\partial B_{r}}{\partial z}-B_{z}\frac{\partial A_{r}}{\partial
z})e_{r}  \notag \\
& +(A_{r}\frac{\partial B_{\theta }}{\partial r}-B_{r}\frac{\partial
A_{\theta }}{\partial r}+\frac{A_{\theta }}{r}\frac{\partial B_{\theta }}{%
\partial \theta }-\frac{B_{\theta }}{r}\frac{\partial A_{\theta }}{\partial
\theta }+A_{z}\frac{\partial B_{\theta }}{\partial z}-B_{z}\frac{\partial
A_{\theta }}{\partial z}+\frac{A_{\theta }B_{r}-B_{\theta }A_{r}}{r}%
)e_{\theta }  \notag \\
& +(A_{r}\frac{\partial B_{z}}{\partial r}-B_{r}\frac{\partial A_{z}}{%
\partial r}+\frac{A_{\theta }}{r}\frac{\partial B_{z}}{\partial \theta }-%
\frac{B_{\theta }}{r}\frac{\partial A_{z}}{\partial \theta }+A_{z}\frac{%
\partial B_{z}}{\partial z}-B_{z}\frac{\partial A_{z}}{\partial z})e_{z} 
\notag
\end{align}

With these preliminaries, let us consider a vector field $\mathbf{v}$ of the
form 
\begin{equation*}
\mathbf{v}=v_{\theta }e_{\theta },\quad v_{\theta }=v_{\theta }(r)
\end{equation*}%
and assume that $\mathbf{B}(t)=B_{r}(t)e_{r}+B_{\theta }(t)e_{\theta
}+B_{z}(t)e_{z},t\geq 0$ is a vector field of class $C^{1}$ on $\mathbb{R}%
^{3}\backslash \left\{ 0\right\} $ which satisfies (on $\mathbb{R}%
^{3}\backslash \left\{ 0\right\} $) the equation 
\begin{equation*}
\frac{\partial \mathbf{B}}{\partial t}+\func{curl}(\mathbf{v}\times \mathbf{B%
})=0
\end{equation*}%
with divergence-free initial condition $\mathbf{B}_{0}$. Notice that $\func{%
div}${$\mathbf{v}$}$=\func{div}${$\mathbf{B}$}$=0$. Indeed, $\mathbf{v}$ is
divergence free vector field by definition and $\frac{\partial \func{div}{%
\mathbf{B}}}{\partial t}=-\func{div}\func{curl}(\mathbf{v}\times \mathbf{B}%
)=0$. Hence we can rewrite equation for $\mathbf{B}$ as follows 
\begin{equation*}
\frac{\partial \mathbf{B}}{\partial t}+\left( {\mathbf{v}}\cdot \nabla
\right) \mathbf{B}-\left( \mathbf{B}\cdot \nabla \right) \mathbf{v}=0.
\end{equation*}%
Consequently, in cylindrical coordinates we have 
\begin{equation*}
\frac{\partial B_{r}}{\partial t}+\frac{v_{\theta }}{r}\frac{\partial B_{r}}{%
\partial \theta }=0,
\end{equation*}%
\begin{equation*}
\frac{\partial B_{\theta }}{\partial t}=B_{r}\frac{\partial v_{\theta }}{%
\partial r}-\frac{v_{\theta }}{r}\frac{\partial B_{\theta }}{\partial \theta 
}-\frac{v_{\theta }}{r}B_{r}=-\frac{v_{\theta }}{r}\frac{\partial B_{\theta }%
}{\partial \theta }+B_{r}(\frac{\partial v_{\theta }}{\partial r}-\frac{%
v_{\theta }}{r}),
\end{equation*}%
\begin{equation*}
\frac{\partial B_{z}}{\partial t}+\frac{v_{\theta }}{r}\frac{\partial B_{z}}{%
\partial \theta }=0.
\end{equation*}

\subsection{The example}

Choose, for some $\alpha \in (0,1)$, 
\begin{equation*}
v_{\theta }(r)=r^{\alpha },\qquad \text{\ for }r\in \left[ 0,1\right]
\end{equation*}%
and define $v_{\theta }$ for $r>1$ in a such way that $v_{\theta }\in
C^{\infty }$, $v_{\theta }>0$ and $v_{\theta }(r)\leq e^{-\gamma r}$ , $%
\gamma >0$, $r\geq A>1$ for some $\gamma ,A$.

Then we have, for $r\in (0,1)$, 
\begin{align}
\frac{\partial B_{r}}{\partial t}+r^{\alpha -1}\frac{\partial B_{r}}{%
\partial \theta }& =0,  \notag \\
\frac{\partial B_{\theta }}{\partial t}+r^{\alpha -1}\frac{\partial
B_{\theta }}{\partial \theta }+(1-\alpha )B_{r}r^{\alpha -1}& =0  \notag \\
\frac{\partial B_{z}}{\partial t}+r^{\alpha -1}\frac{\partial B_{z}}{%
\partial \theta }& =0.  \notag
\end{align}%
Hence we can deduce that (we write $B_{r}^{0},B_{z}^{0},B_{\theta }^{0}$ for
the coordinates of $\mathbf{B}_{0}$), for $r\in (0,1)$, 
\begin{equation*}
B_{r}(t,r,\theta ,z)=B_{r}^{0}(r,\theta -r^{\alpha -1}t,z)
\end{equation*}%
\begin{equation*}
B_{z}(t,r,\theta ,z)=B_{z}^{0}(r,\theta -r^{\alpha -1}t,z)
\end{equation*}%
\begin{equation*}
B_{\theta }(t,r,\theta ,z)=B_{\theta }^{0}(r,\theta -r^{\alpha
-1}t,z)-(1-\alpha )\fbox{r$^{\alpha -1}$}tB_{r}^{0}(r,\theta -r^{\alpha
-1}t,z).
\end{equation*}%
A non-zero radial component $B_{r}^{0}$ of the initial condition near the
vertical axis for the origin ($r=0$) yields a blow-up of the angular
component $B_{\theta }$.

Thus we see that any smooth bounded initial condition $\mathbf{B}_{0}$, such
that $B_{r}^{0}(r,\theta ,z)>0$ for all values of the arguments, gives rise
to a solution $\mathbf{B}$\ such that 
\begin{equation*}
\lim_{r\rightarrow 0}\left\vert B_{\theta }(t,\theta ,r,z)\right\vert =\infty
\end{equation*}%
for any $t>0$, at any point $\left( \theta ,z\right) $. From this one
deduces $\lim_{\left( x,y,z\right) \rightarrow 0}\left\vert \mathbf{B}\left(
t,x,y,z\right) \right\vert =+\infty $ for all $t>0$ (since $B_{\theta }$ is
the projection on $e_{\theta }$ at $\left( x,y,z\right) $; similarly for $%
B_{r},B_{z}$;\ thus the divergence of $B_{\theta }$ and boundedness of $%
B_{r},B_{z}$ imply the divergence of $\mathbf{B}$).

\begin{remark}
With more work, taking a time-dependent vector field $\mathbf{v}$\ which is
smooth until time $t_{0}>0$ when it develops an H\"{o}lder singularity of
the form above, one can construct an example of solution $\mathbf{B}$\ which
is smooth on $[0,t_{0})$ but infinite at some point at time $t_{0}$. Such
example would mimic more closely what maybe could happen in a non-passive
version of the vector advection equation.
\end{remark}

\subsection{The Lagrangian picture}

We summarize here the features of this example, with the following items and
some pictures (just to give a graphical intuition of what happens).

i) The fluid rotates around the vertical $z$-axis $\zeta $ at the origin;
the Lagrangian particles describe circles around $\zeta $, the Cauchy
problem 
\begin{equation}
\frac{d}{dt}X_{t}={\mathbf{v}}(t,X_{t}),\qquad X_{0}=x  \label{ODE}
\end{equation}%
is uniquely solvable and generate a continuous flow $\Phi _{t}(x)$. Figure 1
shows a number of Lagrangian trajectories (solutions of the Cauchy problem (%
\ref{ODE}))\ starting on the $x$-axis, in the regular case $\alpha =1$,
where the velocity produces a rigid motion (no singularity). Figure 2 shows
the case $\alpha =0.2$, where the velocity of rotation near the origin is so
large (still infinitesimal, so that the velocity field is H\"{o}lder
continuous) that very close initial particles are displaced a lot;\ and the
ratio between the displacement at time $t$ and that at time zero diverges
when the particles approach zero.

\begin{multicols}{2}
\includegraphics[width=7cm]{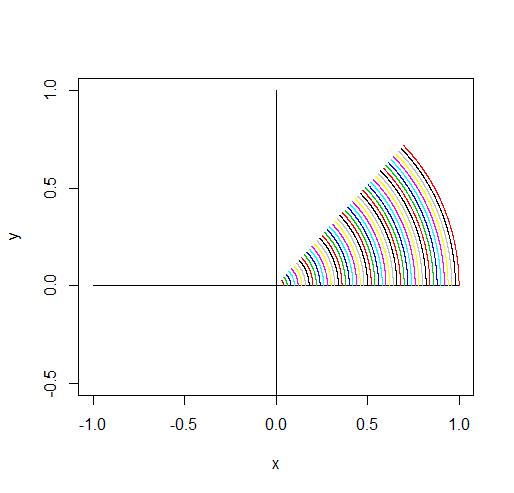}\\
Fig.1. Lagrangian trajectories for $\protect\alpha =1$.\columnbreak

\includegraphics[width=7cm]{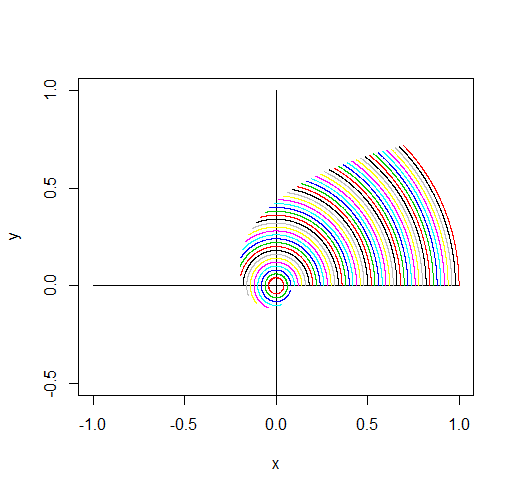}\\
Fig.2. Lagrangian trajectories for $\protect\alpha =0.2$.
\end{multicols}

% \FRAME{itbpFU}{2.335in}{2.1447in}{0in}{\Qcb{Fig.1. Lagrangian trajectories
% for $\protect\alpha =1$.}}{}{Figure}{\special{language "Scientific
% Word";type "GRAPHIC";maintain-aspect-ratio TRUE;display "USEDEF";valid_file
% "T";width 2.335in;height 2.1447in;depth 0in;original-width
% 3.8649in;original-height 3.5466in;cropleft "0.006761";croptop
% "0.977859";cropright "1.006761";cropbottom "-0.022141";tempfilename
% 'N1K3EJ00.jpeg';tempfile-properties "XPR";}}\FRAME{itbpFU}{%
% 2.335in}{2.1447in}{0in}{\Qcb{Fig.2. Lagrangian trajectories for $\protect%
% \alpha =0.2$.}}{}{Figure}{\special{language "Scientific Word";type
% "GRAPHIC";maintain-aspect-ratio TRUE;display "USEDEF";valid_file "T";width
% 2.335in;height 2.1447in;depth 0in;original-width 3.8649in;original-height
% 3.5466in;cropleft "-0.004516";croptop "0.977844";cropright
% "0.995484";cropbottom "-0.022156";tempfilename
% 'N1K3EJ01.jpeg';tempfile-properties "XPR";}}
% '../Levico Beirao/N1K3EJ00.jpeg'

ii) The flow $\Phi _{t}(x)$ is however not differentiable at the vertical
axis $\zeta $ (it is smooth outside $\zeta $), as it may be guesses from
Figure 2; ideal lines of Lagrangian points in a plane orthogonal to $\zeta $
are stretched near $\zeta $ and the stretching becomes infinite at $\zeta $,
see Figure 3 below.

iii) The passive field $\mathbf{B}$\ is also stretched by the fluid and the
stretching blows-up at $\zeta $.

With this picture in mind, we may anticipate the behavior when we add noise.
As we shall see below, the transport type noise, in Stratonovich form,
introduced in equation (\ref{vectadv}), corresponds at the Lagrangian level
to the addition of a random shift to all Lagrangian particles (see equation (%
\ref{SDE})). Figures 3 and 4 below show the time evolution of the ideal line
initially equal to the $x$ axis. In the deterministic case (Figure 3) this
line is infinitely stretched near the origin. In the stochastic case (Figure
4), even with very small noise intensity ($\sigma =0.1)$, the line is
shifted by noise a little bit in all possible directions and thus it passes
through the origin only for a negligible amount of time. Stretching still
occurs but not with infinite strength and the visible result is that the
line at the forth time instant looks still smooth although strongly curved.

Stretching still exists but it is smeared-out, distributed among different
portions of fluid; the deterministic concentration of stretching at $\zeta $
is broken.

\begin{multicols}{2}
\includegraphics[width=7cm]{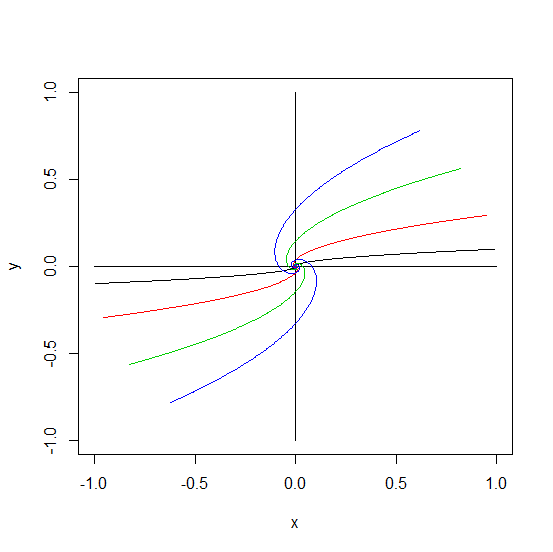}\\
Fig.3. Ideal lines evolution, no noise.\columnbreak

\includegraphics[width=7cm]{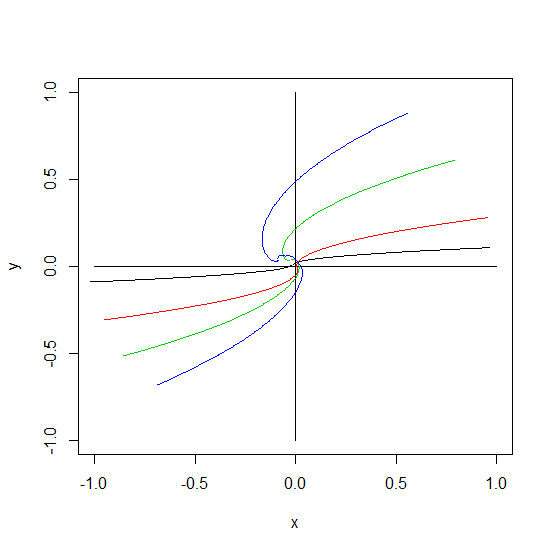}\\
Fig.4. Ideal lines evolution, noise with $\protect\sigma =0.1$.
\end{multicols}

% \FRAME{itbpFU}{2.2442in}{2.1439in}{0in}{\Qcb{Fig.3. Ideal lines evolution,
% no noise.}}{}{Figure}{\special{language "Scientific Word";type
% "GRAPHIC";maintain-aspect-ratio TRUE;display "USEDEF";valid_file "T";width
% 2.2442in;height 2.1439in;depth 0in;original-width 4.158in;original-height
% 3.9712in;cropleft "0";croptop "1";cropright "1";cropbottom "0";tempfilename
% '../Levico Beirao/N1K3EJ02.jpeg';tempfile-properties "XPR";}}\bigskip \FRAME{%
% itbpFU}{2.2442in}{2.1439in}{0in}{\Qcb{Fig.4. Ideal lines evolution, noise
% with $\protect\sigma =0.1$.}}{}{Figure}{\special{language "Scientific
% Word";type "GRAPHIC";maintain-aspect-ratio TRUE;display "USEDEF";valid_file
% "T";width 2.2442in;height 2.1439in;depth 0in;original-width
% 4.158in;original-height 3.9712in;cropleft "0";croptop "1";cropright
% "1";cropbottom "0";tempfilename '../Levico
% Beirao/N1K3EJ03.jpeg';tempfile-properties "XPR";}}

\section{The stochastic case: absence of blow-up}

\subsection{The regular case \label{section 3}}

In this section we study the regular case. Let $\mathbf{W}=\left(
W^{1},W^{2},W^{3}\right) $ be a $3$-dimensional Brownian motion on a
probability space $(\Omega ,\mathcal{A},P)$ and let $(\mathcal{F}_{t})_{t}$
be its natural completed filtration. Let {$\mathbf{v}$} be a divergence-free
vector field in $C^{1}([0,T];C_{c}^{\infty }(\mathbb{R}^{3};\mathbb{R}^{3}))$
and $\mathbf{B}_{0}$ be a divergence-free vector field in $C_{c}^{\infty }(%
\mathbb{R}^{3};\mathbb{R}^{3})$. For a divergence-free solution $\mathbf{B}$%
, equation (\ref{vectadv}) reads formally 
\begin{equation}
d\mathbf{B}+\left( \left( {\mathbf{v}}\cdot \nabla \right) \mathbf{B}-\left( 
\mathbf{B}\cdot \nabla \right) \mathbf{v}\right) dt+\sigma \sum_{k}\partial
_{k}\mathbf{B}\circ dW^{k}=0.  \label{vectadv2}
\end{equation}%
We will always use (\ref{vectadv}) in this form.

\begin{remark}
The Stratonovich operation $\partial _{k}\mathbf{B}\circ dW^{k}$ is the
natural one from the physical viewpoint, because of Wong-Zakai principle,
see the Appendix of \cite{FlaGubPri} for an example, and because of the
formal validity of conservation laws. More rigorously, it is responsible for
the validity of relation (\ref{reprform}) between $\mathbf{B}$ and the
Lagrangian motion, relation which extends to the stochastic case a well know
deterministic relation.
\end{remark}

For mathematical convenience, we translate Stratonovich in It\^{o} form.
Formally, the martingale part of $\partial _{k}\mathbf{B}$ is (from equation
(\ref{vectadv2}) itself) equal to $\sigma \sum_{j}\partial _{k}\partial _{j}%
\mathbf{B}dW^{j}$ and thus the quadratic variation $d\left[ \partial _{k}%
\mathbf{B,}W^{k}\right] $ is equal to $\sigma \partial _{k}\partial _{k}%
\mathbf{B}$; therefore 
\begin{equation*}
\sigma \sum_{k}\partial _{k}\mathbf{B}\circ dW^{k}=\sigma \sum_{k}\partial
_{k}\mathbf{B}dW^{k}+\frac{\sigma ^{2}}{2}\Delta \mathbf{B.}
\end{equation*}%
This is the heuristic justification of the following rigorous definition.

\begin{definition}
A regular solution to (\ref{vectadv}) is a vector field $\mathbf{B}%
:[0,T]\times \mathbb{R}^{3}\times \Omega \rightarrow \mathbb{R}^{3}$ such
that

i) $\mathbf{B}(t,x)$ and its derivatives in $x$ up to fourth order are
continuous in $(t,x)$

ii)\ for every $i,j=1,...,d$ and $x\in \mathbb{R}^{3}$, $\mathbf{B}(t,x)$, $%
\partial _{x_{i}}\mathbf{B}(t,x)$, $\partial _{x_{j}}\partial _{x_{i}}%
\mathbf{B}(t,x)$ are adapted processes

ii) for every $(t,x)$, $\mathrm{div}\mathbf{B}(t,x)=0$ and%
\begin{align*}
\mathbf{B}(t,x)& =\mathbf{B}_{0}(x)+\int_{0}^{t}\left[ \left( \mathbf{B}%
(r,x)\cdot \nabla \right) \mathbf{v}(r,x)-\left( {\mathbf{v}}(r,x)\cdot
\nabla \right) \mathbf{B}(r,x)\right] dr \\
& -\sigma \sum_{k=1}^{3}\int_{0}^{t}\partial _{k}\mathbf{B}(r,x)dW_{r}^{k}+%
\frac{\sigma ^{2}}{2}\int_{0}^{t}\Delta \mathbf{B}(r,x)dr.
\end{align*}
\end{definition}

\begin{remark}
In order to give a meaning to the equation it is not necessary to ask $C^{4}$
regularity in $x$ in point (i); the requirement is imposed to apply It\^{o}%
-Kunita-Wentzell formula (Theorem 3.3.1 in \cite{Kun}) below.
\end{remark}

\begin{remark}
For the purpose of this paper, one can simplify and ask that $\mathbf{B}$ is 
$C^{\infty }$ in $x$, with all derivatives continuous in $(t,x)$;\ the
results below remain true.
\end{remark}

Consider now the SDE on $\mathbb{R}^{3}$ 
\begin{equation}
dX_{t}={\mathbf{v}}(t,X_{t})dt+\sigma d\mathbf{W}_{t},\qquad X_{0}=x.
\label{SDE}
\end{equation}%
It is a classical result (see \cite{Kun}) that there exists a stochastic
flow $\Phi $ of $C^{\infty }$ diffeomorphisms (see Definition \ref{defdiffeo}
in Section \ref{sect Holder drift}) solving the above SDE. Since {$\mathbf{v}
$} is divergence-free, $\Phi _{t}$ and $\Phi _{t}^{-1}$ are also
measure-preserving for every $t$, i.e.\ $det(D\Phi _{t})=1$.

We can now prove the representation formula for the regular solution to
equation (\ref{vectadv}), which will be the key ingredient of our work.

\begin{proposition}
\label{regcase}Suppose $\mathbf{B}_{0}\in C_{c}^{\infty }(\mathbb{R}^{3};%
\mathbb{R}^{3})$ and {$\mathbf{v}$}$\in C^{1}([0,T];C_{c}^{\infty }(\mathbb{R%
}^{3};\mathbb{R}^{3}))$, both divergence free. Then equation (\ref{vectadv})
admits a unique regular solution, satisfying the identity 
\begin{equation}
\mathbf{B}(t,\Phi _{t}(x))=D\Phi _{t}(x)\mathbf{B}_{0}(x).  \label{reprform}
\end{equation}
\end{proposition}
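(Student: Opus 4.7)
The plan is a standard Itô-along-the-flow computation, organized around the linear Jacobi equation for $D\Phi_t$. Since the diffusion coefficient in (\ref{SDE}) is the constant $\sigma \mathbf{e}_k$, differentiating the SDE in the initial datum produces no Itô correction, and $J_t(x) := D\Phi_t(x)$ satisfies, pathwise, the random linear ODE
\[
\dot J_t(x) = D\mathbf{v}(t,\Phi_t(x))\, J_t(x),\qquad J_0(x)=I.
\]
In particular $V_t(x) := J_t(x)\mathbf{B}_0(x)$ is the unique solution of $\dot V_t = D\mathbf{v}(t,\Phi_t(x))\,V_t$ with $V_0(x) = \mathbf{B}_0(x)$. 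This identifies the right-hand side of (\ref{reprform}) as the solution of an explicit linear ODE along each trajectory.

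For uniqueness and the representation formula, let $\mathbf{B}$ be any regular solution and apply the Itô--Kunita--Wentzell formula (Theorem 3.3.1 in \cite{Kun}; the $C^4$ assumption in $x$ in the definition is exactly what is needed) to the composition $U_t(x) := \mathbf{B}(t,\Phi_t(x))$. The computation produces three second-order operators in $\mathbf{B}$: the $\tfrac{\sigma^2}{2}\Delta\mathbf{B}$ drift from the equation, a second $\tfrac{\sigma^2}{2}\Delta\mathbf{B}$ from the quadratic variation of $\Phi_t(x)$, and a $-\sigma^2\Delta\mathbf{B}$ from the cross-variation of the noise term $-\sigma\partial_k\mathbf{B}\,dW^k$ with the $\sigma d\mathbf{W}_t$ in $d\Phi_t$; these cancel. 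Similarly, the transport drift $-(\mathbf{v}\cdot\nabla)\mathbf{B}$ cancels against $\nabla\mathbf{B}\cdot\mathbf{v}\,dt$ picked up from $d\Phi_t(x)$, and the two Itô martingales $\pm\sigma\partial_k\mathbf{B}\,dW^k$ cancel against each other. What remains is
\[
d U_t(x) = \bigl(\mathbf{B}(t,\Phi_t(x))\cdot\nabla\bigr)\mathbf{v}(t,\Phi_t(x))\,dt = D\mathbf{v}(t,\Phi_t(x))\,U_t(x)\,dt,
\]
the same ODE as $V_t$, with $U_0(x)=\mathbf{B}_0(x)$. Pathwise uniqueness for linear ODEs forces $U_t(x)=V_t(x)$, which is precisely (\ref{reprform}); since $\Phi_t$ is an a.s.\ diffeomorphism, this determines $\mathbf{B}(t,y)$ pointwise in terms of the data, giving uniqueness.

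For existence I take $\mathbf{B}(t,y) := J_t(\Phi_t^{-1}(y))\mathbf{B}_0(\Phi_t^{-1}(y))$ as a candidate. The spatial smoothness and continuity in $(t,y,\omega)$ required by the definition of regular solution follow from the classical smooth dependence of the stochastic flow $\Phi_t$ on the initial datum (\cite{Kun}), together with smoothness and compact support of $\mathbf{B}_0$; adaptedness is immediate from the construction of $\Phi_t$. The divergence-free property follows from the classical calculus identity that the pushforward of a divergence-free vector field by a volume-preserving diffeomorphism is divergence free, using $\det J_t(x)\equiv 1$. To verify that this candidate satisfies the Itô equation in the definition, I apply the Itô--Kunita--Wentzell calculation of the previous paragraph in reverse: differentiating $\mathbf{B}(t,\Phi_t(x))=J_t(x)\mathbf{B}_0(x)$ with the ODE satisfied by $J_t$ and comparing the result with the decomposition obtained by Itô--Kunita--Wentzell forces the drift and the martingale part of $d\mathbf{B}(t,y)$ to be exactly those prescribed by the definition of regular solution.

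The main obstacle is the bookkeeping in the Itô--Kunita--Wentzell step: three distinct second-order operators in $\mathbf{B}$ must sum to zero, and this cancellation is precisely what requires the Stratonovich-to-Itô correction coefficient $\tfrac{\sigma^2}{2}$ appearing in (\ref{vectadv2}). With any other coefficient, the composition $\mathbf{B}(t,\Phi_t(x))$ would inherit a spurious Laplacian and the Lagrangian representation (\ref{reprform}) would fail; this is the rigorous content of the Wong--Zakai remark preceding the statement, and it is also the reason the Stratonovich form is imposed on the noise from the outset.
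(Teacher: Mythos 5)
Your proposal is correct and follows essentially the same route as the paper: It\^{o}--Kunita--Wentzell applied to $\mathbf{B}(t,\Phi_t(x))$ with the three second-order terms cancelling, identification with the pathwise linear (Jacobi-type) ODE solved by $D\Phi_t(x)\mathbf{B}_0(x)$ for uniqueness, and the same reverse comparison of the semimartingale decomposition of the candidate for existence, with the divergence-free property from volume preservation. No substantive differences.
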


\begin{remark}
\label{otherrepr}Notice that $D\Phi _{t}(\Phi _{t}^{-1}(x))=(D\Phi
_{t}^{-1}(x))^{-1}$. This inverse matrix is the transpose of the cofactor
matrix of $D\Phi _{t}^{-1}$, multiplied by the inverse of the determinant of 
$D\Phi _{t}^{-1}(x)$, which is $1$ since $\Phi _{t}$ is measure-preserving;
the cofactor matrix of a given $3\times 3$ matrix $A$ is a polynomial
function $H(A)^{T}$, of degree $2$, of $A$. So we have $D\Phi _{t}(\Phi
_{t}^{-1}(x))=H(D\Phi _{t}^{-1}(x))$ and formula (\ref{reprform}) also reads 
\begin{equation}
\mathbf{B}(t,x)=H(D\Phi _{t}^{-1}(x))\mathbf{B}_{0}(\Phi _{t}^{-1}(x)).
\label{reprform 2}
\end{equation}
\end{remark}

\begin{proof}
\textbf{Step 1 }(chain rule). Let us recall the so called It\^{o}%
-Kunita-Wentzell formula (Theorem 8.1 in \cite{KunSF}; see also Theorem
3.3.1 in \cite{Kun} for a variant). We state it with the notations of
interest for us. Assume that $F\left( t,x\right) $, $t\in \left[ 0,T\right] $%
, $x\in \mathbb{R}^{d}$, is a continuous random field, twice differentiable
in $x$ with second derivatives continuous in $\left( t,x\right) $, of the
form%
\begin{equation*}
F\left( t,x\right) =F_{0}\left( x\right) +\int_{0}^{t}f_{0}\left( s,x\right)
ds+\sum_{k=1}^{n}\int_{0}^{t}f_{k}\left( s,x\right) dW_{s}^{k}
\end{equation*}%
where $W^{k}$, $k=1,...,n$ are independent Brownian motions and $f_{k}$, $%
k=0,1,...,n$ are twice differentiable in $x$, continuous in $\left(
t,x\right) $ with their second space derivatives, and for each $x$ the
processes $t\mapsto f_{k}\left( t,x\right) $ are adapted. Let $X_{t}$ be a
continuous semimartingale in $\mathbb{R}^{d}$. Then%
\begin{eqnarray*}
F\left( t,X_{t}\right) &=&F_{0}\left( X_{0}\right) +\int_{0}^{t}f_{0}\left(
s,X_{s}\right) ds+\sum_{k=1}^{n}\int_{0}^{t}f_{k}\left( s,X_{s}\right)
dW_{s}^{k} \\
&&+\int_{0}^{t}\nabla F\left( t,X_{s}\right) \cdot dX_{s}+\frac{1}{2}%
\sum_{k,h=1}^{n}\int_{0}^{t}\partial _{x_{k}}\partial _{x_{h}}F\left(
t,X_{s}\right) d\left[ X^{h},X^{k}\right] _{s} \\
&&+\sum_{k=1}^{n}\sum_{i=1}^{d}\int_{0}^{t}\partial _{x_{i}}f_{k}\left(
s,X_{s}\right) d\left[ X^{i},W^{k}\right] _{s}
\end{eqnarray*}%
where $\left[ X^{h},X^{k}\right] _{t}$ denotes the quadratic mutual
variation between the components of $X$ and similarly for $\left[ X^{h},W^{k}%
\right] _{t}$.

\textbf{Step 2 }(uniqueness). Fix $x$ in $\mathbb{R}^{3}$; observe that $%
D\Phi _{t}(x)\mathbf{B}_{0}(x)$ is the unique solution to 
\begin{equation}
\frac{d\mathbf{Z}_{t}}{dt}=\left( \mathbf{Z}_{t}\cdot \nabla \right) \mathbf{%
v}(t,\Phi _{t}(x)).  \label{derivSDE}
\end{equation}%
with $\mathbf{Z}_{0}=\mathbf{B}_{0}(x)$ (uniqueness follows from the fact
the the stochastic drift for this ODE, namely $(t,y)\rightarrow D${$\mathbf{v%
}$}$(t,\Phi _{t}(x))y$ is in $C_{b}^{1}$). Thus, in order to get uniqueness
for equation (\ref{vectadv}) and prove formula (\ref{reprform}), it is
enough to prove that, for any regular solution $\mathbf{B}$ to (\ref{vectadv}%
), $\mathbf{B}(t,\Phi _{t}(x))$ satisfies equation (\ref{derivSDE}). For
this purpose, we use the chain rule of Step 1 (the assumptions in the
definition of regular solution\ above are imposed precisely in order to
apply this result). For each component $j=1,2,3$ we apply the formula with%
\begin{equation*}
F=B^{j},f_{0}=\left( \left( \mathbf{B}\cdot \nabla \right) \mathbf{v}-\left( 
{\mathbf{v}}\cdot \nabla \right) \mathbf{B}\right) ^{j}+\frac{\sigma ^{2}}{2}%
\Delta B^{j},f_{k}=-\sigma \partial _{k}B^{j},X_{t}=\Phi _{t}(x).
\end{equation*}%
The result, rewritten in vector form, is%
\begin{eqnarray*}
d[\mathbf{B}(t,\Phi _{t}(x))] &=&\left( d\mathbf{B}\right) (t,\Phi _{t}(x))
\\
&&+\sum_{i=1}^{3}\partial _{x_{i}}\mathbf{B}(t,\Phi _{t}(x))d\Phi _{t}^{i}(x)
\\
&&+\frac{\sigma ^{2}}{2}\Delta \mathbf{B}(t,\Phi _{t}(x))dt-\sigma
^{2}\Delta \mathbf{B}(t,\Phi _{t}(x))dt
\end{eqnarray*}%
because%
\begin{equation*}
\sum_{k=1}^{3}\sum_{i=1}^{3}\int_{0}^{t}\partial _{x_{i}}f_{k}d\left[
X^{i},W^{k}\right] _{s}=-\sigma
\sum_{k=1}^{3}\sum_{i=1}^{3}\int_{0}^{t}\partial _{x_{i}}\partial
_{k}B^{j}\sigma \delta _{ik}ds=-\sigma ^{2}\int_{0}^{t}\Delta B^{j}ds
\end{equation*}%
(since $d\left[ X^{i},W^{k}\right] _{s}=\sigma \delta _{ik}ds$). Therefore%
\begin{eqnarray*}
d[\mathbf{B}(t,\Phi _{t}(x))] &=&\left( \left( \mathbf{B}\cdot \nabla
\right) \mathbf{v}-\left( {\mathbf{v}}\cdot \nabla \right) \mathbf{B}\right)
dt+\frac{\sigma ^{2}}{2}\Delta \mathbf{B}dt-\sigma \sum_{k=1}^{3}\partial
_{k}\mathbf{B}dW_{t}^{k} \\
&&+\left( {\mathbf{v}}\cdot \nabla \right) \mathbf{B}dt+\sigma
\sum_{k=1}^{3}\partial _{k}\mathbf{B}dW_{t}^{k} \\
&&-\frac{\sigma ^{2}}{2}\Delta \mathbf{B}(t,\Phi _{t}(x))dt
\end{eqnarray*}%
\begin{equation*}
=\left( \left( \mathbf{B}\cdot \nabla \right) \mathbf{v}\right) (t,\Phi
_{t}(x))dt.
\end{equation*}%
Therefore $\mathbf{B}(t,\Phi _{t}(x))$ satisfies (\ref{derivSDE}).
Uniqueness and formula (\ref{reprform}) are proved.

\textbf{Step 3 }(existence). Conversely, given $\mathbf{B}$ defined by (\ref%
{reprform}), let us prove that it is a regular solution to equation (\ref%
{vectadv}). Properties (i)-(ii)\ of the definition of regular solution are
obvious from (\ref{reprform 2}) (indeed $\mathbf{B}$ is $C^{\infty }$ in $x$%
). It also follows, from It\^{o}-Kunita-Wentzell formula,\ that $\mathbf{B}%
\left( t,x\right) $ has the form 
\begin{equation}
d\mathbf{B}\left( t,x\right) =\mathbf{A}\left( t,x\right) dt+\sum_{k=1}^{3}%
\mathbf{S}_{k}\left( t,x\right) dW_{t}^{k}  \label{decomp}
\end{equation}%
where $\mathbf{B}\left( t,x\right) ,\mathbf{A}\left( t,x\right) ,\mathbf{S}%
_{k}\left( t,x\right) $ are continuous in $\left( t,x\right) $ with their
second space derivatives, and are adapted in $t$ for every $x$. Notice that
we do not need to compute explicitly $\mathbf{A}\left( t,x\right) $ and $%
\mathbf{S}_{k}\left( t,x\right) $ (by It\^{o}-Kunita-Wentzell formula) from
the identity (\ref{reprform 2}) (this would involve too complex expressions
with derivatives of the flow). We just need to realize that It\^{o}%
-Kunita-Wentzell formula can be applied and gives a decomposition of the
form (\ref{decomp}) with $\mathbf{B}\left( t,x\right) ,\mathbf{A}\left(
t,x\right) ,\mathbf{S}_{k}\left( t,x\right) $ having the regularity stated
above.

Thanks to this regularity, we may apply It\^{o}-Kunita-Wentzell formula to $%
\mathbf{B}(t,\Phi _{t}(x))$, where now we only know that identities (\ref%
{decomp}) and (\ref{reprform}) are satisfied by $\mathbf{B}$. On one side,
from (\ref{reprform}) and the fact that $D\Phi _{t}(x)\mathbf{B}_{0}(x)$ is
the unique solution to (\ref{derivSDE}) we get%
\begin{equation*}
d[\mathbf{B}(t,\Phi _{t}(x))]=\left( \mathbf{B}(t,\Phi _{t}(x))\cdot \nabla
\right) \mathbf{v}(t,\Phi _{t}(x))dt.
\end{equation*}%
On the other side, similarly to the computation of Step 2, from It\^{o}%
-Kunita-Wentzell formula applied to the function%
\begin{equation*}
F=B^{j},f_{0}=A^{j},f_{k}=S_{k}^{j},X_{t}=\Phi _{t}(x)
\end{equation*}%
we get 
\begin{eqnarray*}
d[\mathbf{B}(t,\Phi _{t}(x))] &=&\left( d\mathbf{B}\right) (t,\Phi _{t}(x))
\\
&&+\sum_{i=1}^{3}\partial _{x_{i}}\mathbf{B}(t,\Phi _{t}(x))d\Phi _{t}^{i}(x)
\\
&&+\frac{\sigma ^{2}}{2}\Delta \mathbf{B}(t,\Phi _{t}(x))dt+\sigma
\sum_{k=1}^{3}\partial _{x_{k}}\mathbf{S}_{k}(t,\Phi _{t}(x))dt
\end{eqnarray*}%
because now%
\begin{equation*}
\sum_{k=1}^{3}\sum_{i=1}^{3}\int_{0}^{t}\partial _{x_{i}}f_{k}d\left[
X^{i},W^{k}\right] _{s}=\sum_{k=1}^{3}\sum_{i=1}^{3}\int_{0}^{t}\partial
_{x_{i}}S_{k}^{j}\sigma \delta _{ik}ds=\sigma
\sum_{k=1}^{3}\int_{0}^{t}\partial _{x_{k}}S_{k}^{j}ds.
\end{equation*}%
Therefore%
\begin{eqnarray*}
d[\mathbf{B}(t,\Phi _{t}(x))] &=&{\mathbf{A}}(t,\Phi
_{t}(x))dt+\sum_{k=1}^{3}\mathbf{S}_{k}(t,\Phi _{t}(x))dW_{t}^{k} \\
&&+\left( \left( {\mathbf{v}}\cdot \nabla \right) \mathbf{B}\right) (t,\Phi
_{t}(x))dt+\sigma \sum_{k=1}^{3}\partial _{k}\mathbf{B}(t,\Phi
_{t}(x))dW_{t}^{k} \\
&&+\frac{\sigma ^{2}}{2}\Delta \mathbf{B}(t,\Phi _{t}(x))dt+\sigma
\sum_{k=1}^{3}\partial _{x_{k}}\mathbf{S}_{k}(t,\Phi _{t}(x))dt.
\end{eqnarray*}%
Equating the two identities satisfied by $d[\mathbf{B}(t,\Phi _{t}(x))]$,
and using the invertibility of $\Phi _{t}$, we get%
\begin{eqnarray*}
\mathbf{S}_{k} &=&-\sigma \partial _{k}\mathbf{B} \\
\left( {\mathbf{B}}\cdot \nabla \right) \mathbf{v} &=&{\mathbf{A}}+\left( {%
\mathbf{v}}\cdot \nabla \right) \mathbf{B}+\frac{\sigma ^{2}}{2}\Delta 
\mathbf{B}+\sigma \sum_{k=1}^{3}\partial _{x_{k}}\mathbf{S}_{k}.
\end{eqnarray*}%
Thus%
\begin{equation*}
{\mathbf{A}}=\left( {\mathbf{B}}\cdot \nabla \right) \mathbf{v-}\left( {%
\mathbf{v}}\cdot \nabla \right) \mathbf{B}+\frac{\sigma ^{2}}{2}\Delta 
\mathbf{B}
\end{equation*}%
which completes the proof that $\mathbf{B}$\ satisfies the SPDE.

It remains to prove the divergence-free property. For this, since $\mathbf{B}
$ is regular, it is enough to show that, for every fixed $t$, for a.e.\ $%
\omega $, $\mathrm{div}\mathbf{B}(t,\cdot ,\omega )$ is $0$ in the sense of
distributions. For this, take $\varphi $ in $C_{c}^{\infty }(\mathbb{R}^{3})$%
; then, using integration by parts (notice that also $\mathbf{B}(\omega )$
has compact support and remember that $\Phi _{t}$ is measure-preserving)%
\begin{equation*}
\int_{\mathbb{R}^{3}}\mathbf{B}_{t}\cdot \nabla \varphi dx=\int_{\mathbb{R}%
^{3}}D\Phi _{t}\mathbf{B}_{0}\cdot \nabla \varphi (\Phi _{t})dx
\end{equation*}%
\begin{equation*}
=\int_{\mathbb{R}^{3}}\mathbf{B}_{0}\cdot (D\Phi _{t})^{T}\nabla \varphi
(\Phi _{t})dx=\int_{\mathbb{R}^{3}}\mathbf{B}_{0}\cdot \nabla \lbrack
\varphi (\Phi _{t})]dx=0
\end{equation*}%
since $\mathbf{B}_{0}$ is divergence-free. The proof is complete.
\end{proof}

\subsection{The case when $\mathbf{v}$ is only H\"{o}lder continuous and
bounded\label{sect Holder drift}}

In this section we shall always assume $\sigma \neq 0$ and the following
condition.

\begin{condition}
\label{Holdercond} The vector field $\mathbf{v}$ is in $C([0,T];C_{b}^{%
\alpha }(\mathbb{R}^{3};\mathbb{R}^{3}))$ for some $\alpha \in \left(
0,1\right) $ and it is divergence-free.
\end{condition}

\begin{definition}
\label{def distrib sol}Let $\mathbf{B}_{0}$ be divergence-free and in $C(%
\mathbb{R}^{3};\mathbb{R}^{3})$. A continuous weak solution to equation (\ref%
{vectadv}) is a vector field $\mathbf{B}:[0,T]\times \mathbb{R}^{3}\times
\Omega \rightarrow \mathbb{R}^{3}$, with a.e. path in $C([0,T]\times \mathbb{%
R}^{3};\mathbb{R}^{3})$, weakly adapted to $(\mathcal{F}_{t})_{t}$ (namely
such that $\langle \mathbf{B},\varphi \rangle $ is adapted for all $\varphi $
in $C_{c}^{\infty }(\mathbb{R}^{3};\mathbb{R}^{3})$) such that:

i) for every $\varphi $ in $C_{c}^{\infty }(\mathbb{R}^{3};\mathbb{R}^{3})$,
the continuous adapted process $\langle \mathbf{B},\varphi \rangle $
satisfies 
\begin{equation}
\langle \mathbf{B}_{t},\varphi \rangle =\langle \mathbf{B}_{0},\varphi
\rangle +\int_{0}^{t}\langle (D\varphi )_{r}^{A}\mathbf{v},\mathbf{B}%
_{r}\rangle dr+\sigma \sum_{k=1}^{d}\int_{0}^{t}\langle (D\varphi )\mathbf{e}%
_{k},\mathbf{B}_{r}\rangle dW_{r}^{k}+\frac{\sigma ^{2}}{2}%
\int_{0}^{t}\langle \Delta \varphi ,\mathbf{B}_{r}\rangle dr,
\label{vectadv_distrib}
\end{equation}%
where $((D\varphi )(x))^{A}=D\varphi (x)-(D\varphi (x))^{T}$ is the
antisymmetric part of the matrix $D\varphi (x)$;

ii) $\mathbf{B}_{t}$ is divergence-free, in the sense that $P\{\mathrm{div}%
\mathbf{B}_{t}=0,\ \forall t\in \left[ 0,T\right] \}=1$.
\end{definition}

Notice that the It\^{o} integrals are well defined since the processes $%
\langle (D\varphi )\mathbf{e}_{k},\mathbf{B}_{r}\rangle $ are continuous and
adapted.

\begin{remark}
One can define a similar notion of $L^{p}$ weak solution and, at least for $%
p>1$, existence and uniqueness should remain true with a more elaborated
proof. We restrict ourselves to continuous solution to emphasize the no
blow-up result.
\end{remark}

The aim of this section is to prove the following main result.

\begin{theorem}
\label{noblowup}Assume that $\sigma \neq 0$. Let $\mathbf{B}_{0}$ be
divergence-free in $C(\mathbb{R}^{3};\mathbb{R}^{3})$ and suppose Condition %
\ref{Holdercond}. Then there exists a unique continuous weak solution $%
\mathbf{B}$ to equation (\ref{vectadv}), starting from $\mathbf{B}_{0}$. In
particular no blow-up occurs.
\end{theorem}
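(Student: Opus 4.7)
The plan is to use the representation formula of Proposition \ref{regcase} as the \emph{definition} of the candidate solution in the H\"older setting, establish existence by smooth approximation, and reduce uniqueness to the announced Lemma \ref{lemma uniqueness}.

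First I would exploit the regularization-by-noise results for SDEs with H\"older drift and non-degenerate additive noise (\cite{FedriFla}, \cite{BFGM}, \cite{MohNilPro}): under Condition \ref{Holdercond} and $\sigma\neq 0$, the SDE (\ref{SDE}) admits a unique stochastic flow $\Phi_t$ of $C^{1+\beta}$-diffeomorphisms for some $\beta\in(0,\alpha)$, satisfying
\begin{equation*}
\mathbb{E}\Big[\sup_{x\in K,\,t\in[0,T]}|D\Phi_t^{-1}(x)|^{p}\Big]<\infty
\end{equation*}
for every $p\geq 1$ and every compact $K\subset\mathbb{R}^{3}$. The flow is measure-preserving because $\mathrm{div}\,\mathbf{v}=0$. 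Guided by Remark \ref{otherrepr}, I would define
\begin{equation*}
\mathbf{B}(t,x) := H\!\bigl(D\Phi_t^{-1}(x)\bigr)\,\mathbf{B}_{0}\!\bigl(\Phi_t^{-1}(x)\bigr).
\end{equation*}
Joint continuity of $(t,x)\mapsto(\Phi_t^{-1}(x),D\Phi_t^{-1}(x))$ and of $\mathbf{B}_0$ yields $\mathbf{B}\in C([0,T]\times\mathbb{R}^{3};\mathbb{R}^{3})$ almost surely, and the moment bound above already rules out any blow-up.

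To verify that this $\mathbf{B}$ is a continuous weak solution, I would approximate: choose divergence-free $\mathbf{v}^{n}\in C^{1}([0,T];C_{c}^{\infty})$ with $\mathbf{v}^{n}\to\mathbf{v}$ in $C([0,T];C_{b}^{\alpha'})$ for some $\alpha'\in(0,\alpha)$ (mollification plus a smooth cut-off, maintaining zero divergence by mollifying a vector potential), and divergence-free $\mathbf{B}_{0}^{n}\in C_{c}^{\infty}$ with $\mathbf{B}_{0}^{n}\to\mathbf{B}_{0}$ locally uniformly. Proposition \ref{regcase} produces regular solutions $\mathbf{B}^{n}$ given by the same formula with $\Phi^{n}$ in place of $\Phi$; and the stability of the flow and its Jacobian under H\"older perturbations of the drift (again \cite{FedriFla}, \cite{BFGM}) gives $((\Phi^{n})^{-1},D(\Phi^{n})^{-1})\to(\Phi^{-1},D\Phi^{-1})$ locally uniformly in probability, with uniform $L^{p}$ bounds. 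Hence $\mathbf{B}^{n}\to\mathbf{B}$ locally uniformly in probability with the same bounds, and one passes to the limit in each term of (\ref{vectadv_distrib}): the deterministic integrals by dominated convergence, the martingale integral by the Burkholder--Davis--Gundy inequality combined with the moment bounds on $D\Phi^{-1}$. Divergence-freeness is preserved because it holds for every $\mathbf{B}^{n}$ and passes to the limit when tested against $\nabla\varphi$.

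Uniqueness, and therefore the fact that the object of Step 1 is the unique continuous weak solution, is delegated to the announced Lemma \ref{lemma uniqueness}, and this is where I expect the real difficulty. The standard DiPerna--Lions scheme -- mollify, estimate the commutator, apply Gr\"onwall -- fails here because the stretching term $(\mathbf{B}\cdot\nabla)\mathbf{v}$ demands Sobolev-type regularity of $\mathbf{v}$ which is not available under mere H\"older continuity. The antisymmetric form $(D\varphi)^{A}$ already visible in (\ref{vectadv_distrib}), combined with the parabolic smoothing supplied by the It\^o correction $\tfrac{\sigma^{2}}{2}\Delta\mathbf{B}$, should enable a cancellation in the spirit of \cite{NevesOliv} that lets the commutator terms close a Gr\"onwall estimate in a suitable negative-order norm. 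Modulo this lemma, the existence argument is a fairly standard package of flow regularization and stability estimates; the heart of the proof is the uniqueness lemma itself.
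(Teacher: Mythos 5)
Your existence half matches the paper's strategy almost exactly: the paper's Lemma \ref{lemma_reprformula} defines $\mathbf{B}$ by formula (\ref{reprform 3}) using the $C^{1,\alpha'}$ stochastic flow from Theorem \ref{flowdiffeo} (which is the paper's citation of \cite{FlaGubPri} rather than \cite{FedriFla}, \cite{BFGM}, but the content is the same), approximates $\mathbf{v}$ and $\mathbf{B}_0$ by smooth compactly supported divergence-free fields, invokes Proposition \ref{regcase} for the approximants, and passes to the limit term by term in (\ref{vectadv_distrib}) using the $L^m(\Omega)$ stability of the flow and its Jacobian, Burkholder's inequality for the stochastic integral, and dominated convergence; divergence-freeness is obtained in the limit by testing against $\nabla\varphi$. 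Up to minor differences (the paper works with pointwise-in-$x$ convergence in $L^m(\Omega)$ plus uniform moment bounds rather than local uniform convergence in probability), this part of your proposal is correct and is the same argument.

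The gap is uniqueness. The theorem you are asked to prove asserts existence \emph{and} uniqueness, and the paper's proof consists precisely of the two lemmata; you delegate the second entirely to ``the announced Lemma \ref{lemma uniqueness}'' without proving it, so the proposal is incomplete on the harder half. Moreover, the sketch you do give points in a direction the paper deliberately avoids: there is no Gr\"onwall estimate in a negative-order norm and no use of parabolic smoothing from the It\^o correction in the paper's uniqueness argument. What the paper actually does is mollify $\mathbf{B}$ in space, apply It\^o--Kunita--Wentzell to $\mathbf{B}^{\epsilon}(t,\Phi_t^{\epsilon}(x))$ where $\Phi^{\epsilon}$ is the regular flow of the mollified drift, multiply by $\left(D\Phi_t^{\epsilon}(x)\right)^{-1}$, test against $\varphi$, change variables using measure preservation, and integrate by parts so that the right-hand side becomes $\sum_{i,j}\int_0^t\int\left[(v^jB^i)^{\epsilon}-v^{\epsilon,j}B^{\epsilon,i}\right](D\psi^{\epsilon})^{A}_{ij}$. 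The two key cancellations are: (a) the difference $(v^jB^i)^{\epsilon}-v^{\epsilon,j}B^{\epsilon,i}$ tends to zero uniformly on compacts, so no commutator estimate and no Gr\"onwall loop are needed; and (b) only the \emph{antisymmetric} part of $D\psi^{\epsilon}$ appears, and the second derivatives of the flow --- which are not controlled for a merely H\"older drift --- cancel identically in that antisymmetric part. Without identifying this specific algebraic mechanism (or an equally concrete substitute), the uniqueness claim remains unproved, and the commutator-plus-Gr\"onwall route you gesture at is exactly the one the authors state cannot be carried out in the vectorial setting under Condition \ref{Holdercond}.
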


Let us recall the notion of stochastic flow of $C^{1,\beta }$
diffeomorphisms, limited to the properties of interest to us.

\begin{definition}
\label{defdiffeo} A stochastic flow $\Phi $ of $C^{1,\beta }$
diffeomorphisms (on $\mathbb{R}^{3}$), $\beta \in \left( 0,1\right) $, is a
map $[0,T]\times \mathbb{R}^{3}\times \Omega \rightarrow \mathbb{R}^{3}$
such that

\begin{itemize}
\item for every $x$ in $\mathbb{R}^{3}$, $\Phi(x)$ is adapted to $(\mathcal{F%
}_{t})_{t}$;

\item for a.e.\ $\omega$ in $\Omega$, $\Phi(\omega)$ is a flow of $%
C^{1,\beta }$ diffeomorphisms, i.e.\ $\Phi_{0}(\omega)=id$, for every $t$, $%
\Phi _{t}(\omega)$ is a diffeomorphism, $\Phi_{t}$, $\Phi^{-1}_{t}$, $%
D\Phi_{t}$ and $D\Phi^{-1}_{t}$ are jointly continuous on $[0,T]\times%
\mathbb{R}^{3}$, $\beta$-H\"older continuous in space uniformly in time.
\end{itemize}
\end{definition}

In the definition of flows we did not mention the cocycle property, since it
is not useful for our purposes.

We need the following result (valid more in general in $\mathbb{R}^{d}$),
see \cite{FlaGubPri}, Theorem 5.

\begin{theorem}
\label{flowdiffeo}Assume that $\sigma \neq 0$. Let $\mathbf{v}$ satisfy
Condition \ref{Holdercond} and consider the SDE (\ref{SDE}) on $\mathbb{R}%
^{3}$.

\begin{enumerate}
\item For every $x$ in $\mathbb{R}^{3}$, there exists a unique strong
solution to the SDE (\ref{SDE}) starting from $x$. There exists a stochastic
flow of $C^{1,\alpha ^{\prime }}$ diffeomorphisms, for every $\alpha
^{\prime }<\alpha $, solving the SDE and belonging to $L_{loc}^{\infty
}([0,T]\times \mathbb{R}^{3};L^{m}(\Omega ))$ for every finite $m$.

\item Let $(\mathbf{v}^{\epsilon })_{\epsilon >0}$ be a family of
divergence-free vector fields in $C([0,T];C_{b}^{\alpha }(\mathbb{R}^{3};%
\mathbb{R}^{3}))$ converging to $\mathbf{v}$ in this space, as $\epsilon
\rightarrow 0$. For every $\epsilon >0$, let $\Phi ^{\epsilon }$ be the
stochastic flow of diffeomorphisms solving (\ref{SDE}) with drift $\mathbf{v}%
^{\epsilon }$. Then, for every $R>0$ and every $m\geq 1$, the following
results hold: 
\begin{align}
\lim_{\epsilon \rightarrow 0}\sup_{t\in \lbrack 0,T]}\sup_{\left\vert
x\right\vert \leq R}E[|\Phi _{t}^{\epsilon }(x)-\Phi _{t}(x)|^{m}]& =0, \\
\lim_{\epsilon \rightarrow 0}\sup_{t\in \lbrack 0,T]}\sup_{\left\vert
x\right\vert \leq R}E[|D\Phi _{t}^{\epsilon }(x)-D\Phi _{t}(x)|^{m}]& =0
\end{align}%
and the same for the inverse flow $\Phi _{t}^{-1}$ and its derivative in
space.

\item for every $t$, $\Phi_{t}$ is measure-preserving, i.e.\ $det(D\Phi
_{t}(x))=1$ for every $x$ in $\mathbb{R}^{3}$.
\end{enumerate}
\end{theorem}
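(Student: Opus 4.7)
The approach is the Itô-Tanaka/Zvonkin transformation, which reduces the SDE with Hölder drift to one with Lipschitz (indeed $C^{1,\alpha}$) coefficients. First I would solve, componentwise, the vector-valued parabolic Cauchy problem
\begin{equation*}
\partial_{t} U + \frac{\sigma^{2}}{2}\Delta U + (\mathbf{v}\cdot\nabla)U - \lambda U = -\mathbf{v},\qquad U(T,\cdot)=0,
\end{equation*}
with $\lambda>0$ a free parameter. Parabolic Schauder/semigroup estimates, treating $(\mathbf{v}\cdot\nabla)U$ as a lower-order perturbation of the heat operator, give a unique $U\in C([0,T];C_{b}^{2,\alpha}(\mathbb{R}^{3};\mathbb{R}^{3}))$, and for $\lambda$ sufficiently large one has $\|DU\|_{\infty}\le 1/2$. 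Consequently $\gamma_{t}(x):=x+U(t,x)$ is a bi-Lipschitz $C^{2,\alpha}$ diffeomorphism of $\mathbb{R}^{3}$ uniformly in $t$, jointly continuous in $(t,x)$ together with $\gamma_{t}^{-1}$.

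For item 1, apply Itô's formula formally to $Y_{t}:=\gamma_{t}(X_{t})$ where $X$ solves \eqref{SDE}. Using the PDE to cancel the $\mathbf{v}$-term, one obtains
\begin{equation*}
dY_{t} = \lambda\,U(t,\gamma_{t}^{-1}(Y_{t}))\,dt + \sigma\bigl(I+DU(t,\gamma_{t}^{-1}(Y_{t}))\bigr)\,d\mathbf{W}_{t},
\end{equation*}
an SDE with $C^{1,\alpha}$ and uniformly non-degenerate coefficients. Classical Kunita theory (\cite{Kun}) then yields a unique strong solution $Y$ for every starting point and a stochastic flow $\Psi$ of $C^{1,\alpha'}$ diffeomorphisms (any $\alpha'<\alpha$) with moment bounds on $D\Psi,D\Psi^{-1}$. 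Setting $\Phi_{t}:=\gamma_{t}^{-1}\circ\Psi_{t}\circ\gamma_{0}$ produces the desired stochastic flow for \eqref{SDE}; uniqueness of strong solutions transfers back because $\gamma_{0}$ is a bi-Lipschitz change of variable, so any two strong solutions of \eqref{SDE} from $x$ map to two strong solutions of the transformed SDE from $\gamma_{0}(x)$, forced to coincide. The stated $L_{loc}^{\infty}([0,T]\times\mathbb{R}^{3};L^{m}(\Omega))$ bound on $\Phi$ follows from the Kunita moment bounds on $\Psi$ and the boundedness of $D\gamma,D\gamma^{-1}$.

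For item 2, a quantitative version of the Schauder estimate is linear, so $\mathbf{v}^{\epsilon}\to\mathbf{v}$ in $C([0,T];C_{b}^{\alpha})$ implies $U^{\epsilon}\to U$ in $C([0,T];C_{b}^{2,\alpha''})$ for any $\alpha''<\alpha$, hence $\gamma^{\epsilon},(\gamma^{\epsilon})^{-1}$ and the coefficients $\tilde b^{\epsilon},\tilde\sigma^{\epsilon}$ of the transformed SDEs converge to their counterparts in $C_{b}^{1,\alpha''}$. Classical $L^{m}$-stability for flows of SDEs with convergent $C^{1,\alpha''}$ coefficients and uniformly non-degenerate noise then gives the required convergence of $\Psi^{\epsilon},D\Psi^{\epsilon}$ and of their inverses, uniformly on compacts in $x$ and $t$, and composing with the deterministic converging diffeomorphisms $\gamma^{\epsilon}$ produces the claimed uniform $L^{m}$-convergence of $\Phi^{\epsilon},D\Phi^{\epsilon},(\Phi^{\epsilon})^{-1},D(\Phi^{\epsilon})^{-1}$. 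For item 3, mollify $\mathbf{v}$ in space to get smooth divergence-free $\mathbf{v}^{\epsilon}$; for such drifts the flow $\Phi^{\epsilon}$ is classically $C^{\infty}$ and the Jacobian $J_{t}^{\epsilon}(x):=\det D\Phi_{t}^{\epsilon}(x)$ satisfies, pathwise, $\tfrac{d}{dt}J_{t}^{\epsilon}(x)=(\mathrm{div}\,\mathbf{v}^{\epsilon})(t,\Phi_{t}^{\epsilon}(x))\,J_{t}^{\epsilon}(x)=0$ because the Stratonovich noise is additive and spatially constant; hence $J^{\epsilon}\equiv 1$. Item 2 and continuity of $\det$ pass this to the limit, giving $\det D\Phi_{t}(x)=1$.

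The main obstacle is making the Zvonkin transformation rigorous in this low-regularity regime: obtaining a genuine $C^{2,\alpha}$ solution $U$ when $\mathbf{v}$ is only continuous in time (so classical $C^{1+\alpha/2,2+\alpha}_{t,x}$ parabolic Schauder theory does not apply directly, and one must work with the heat-semigroup Duhamel formula and bootstrap the spatial regularity), and then quantitatively controlling the moments of $D\Psi,D\Psi^{-1}$ and their dependence on $\|\mathbf{v}\|_{C_{b}^{\alpha}}$ tightly enough to close the stability statement of item 2. The slight loss $\alpha'<\alpha$ is inherent to combining the PDE estimate with the chain rule through the random diffeomorphism $\gamma_{t}^{-1}$.
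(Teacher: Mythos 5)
The paper does not actually prove this theorem: it is quoted as an external result, ``see \cite{FlaGubPri}, Theorem 5,'' and no argument for it appears in the text. Your proposal is essentially a correct reconstruction of the proof given in that cited reference --- the It\^o--Tanaka/Zvonkin transformation via the parabolic resolvent equation for $U$, transfer of strong uniqueness and of the $C^{1,\alpha'}$ flow through the bi-Lipschitz change of variables $\gamma_t(x)=x+U(t,x)$, stability of the transformed coefficients from the (linear) Schauder estimate for item 2, and the Liouville-plus-approximation argument for $\det D\Phi_t=1$ --- so it takes the same route as the source the paper relies on.
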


We split the proof of Theorem \ref{noblowup} in two lemmata, one of
existence and the other of uniqueness.

\begin{lemma}
\label{lemma_reprformula} Let $\mathbf{B}_{0}$ be divergence-free and in $C(%
\mathbb{R}^{3};\mathbb{R}^{3})$ and suppose Condition \ref{Holdercond} hold;
let $\Phi $ be the flow of diffeomorphisms solving the SDE (\ref{SDE}) (as
given in Theorem \ref{flowdiffeo}). Define the random vector field $\mathbf{B%
}$ as 
\begin{equation}
\mathbf{B}(t,x)=D\Phi _{t}(\Phi _{t}^{-1}(x))\mathbf{B}_{0}(\Phi
_{t}^{-1}(x)).  \label{reprform 3}
\end{equation}%
Then $\mathbf{B}$ is a continuous weak solution to equation (\ref{vectadv}).
\end{lemma}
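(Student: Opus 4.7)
I would proceed by regular approximation, leveraging Proposition \ref{regcase} in the smooth setting and the flow stability result of Theorem \ref{flowdiffeo}(2) to pass to the limit. The properties that do not involve the equation are read off directly from formula (\ref{reprform 3}): the joint continuity of $(t,x)\mapsto \Phi_t^{-1}(x)$ and of $D\Phi_t$ (Theorem \ref{flowdiffeo}(1)) combined with continuity of $\mathbf{B}_0$ gives that $\mathbf{B}\in C([0,T]\times\mathbb{R}^3;\mathbb{R}^3)$ almost surely, while weak adaptedness of $\langle \mathbf{B},\varphi\rangle$ follows from adaptedness of $\Phi$ via Fubini. The rest of the argument is devoted to verifying (\ref{vectadv_distrib}) and the divergence-free property.

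Choose divergence-free mollifications $\mathbf{v}^\epsilon\in C^1([0,T];C_c^\infty(\mathbb{R}^3;\mathbb{R}^3))$ with $\mathbf{v}^\epsilon\to\mathbf{v}$ in $C([0,T];C_b^\alpha)$, and divergence-free $\mathbf{B}_0^\epsilon\in C_c^\infty(\mathbb{R}^3;\mathbb{R}^3)$ with $\mathbf{B}_0^\epsilon\to\mathbf{B}_0$ locally uniformly (the latter is standard via a compactly supported mollification of a vector potential $\mathbf{A}_0$ with $\mathrm{curl}\,\mathbf{A}_0=\mathbf{B}_0$). Proposition \ref{regcase} applied to the approximated problem yields a regular solution $\mathbf{B}^\epsilon$ with $\mathbf{B}^\epsilon(t,x)=D\Phi^\epsilon_t((\Phi^\epsilon_t)^{-1}(x))\mathbf{B}_0^\epsilon((\Phi^\epsilon_t)^{-1}(x))$, where $\Phi^\epsilon$ is the flow generated by $\mathbf{v}^\epsilon$. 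Testing the strong equation against $\varphi\in C_c^\infty$ and integrating by parts, using the divergence-free properties of $\mathbf{v}^\epsilon$ and $\mathbf{B}^\epsilon$, one obtains the algebraic identity $\langle(\mathbf{v}^\epsilon\cdot\nabla)\mathbf{B}^\epsilon-(\mathbf{B}^\epsilon\cdot\nabla)\mathbf{v}^\epsilon,\varphi\rangle=-\langle(D\varphi)^A\mathbf{v}^\epsilon,\mathbf{B}^\epsilon\rangle$, while $-\langle\partial_k\mathbf{B}^\epsilon,\varphi\rangle=\langle (D\varphi)\mathbf{e}_k,\mathbf{B}^\epsilon\rangle$ and $\langle\Delta\mathbf{B}^\epsilon,\varphi\rangle=\langle\mathbf{B}^\epsilon,\Delta\varphi\rangle$. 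Consequently $\mathbf{B}^\epsilon$ satisfies the weak formulation (\ref{vectadv_distrib}) with data $(\mathbf{v}^\epsilon,\mathbf{B}_0^\epsilon)$.

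The core step is the passage to the limit $\epsilon\to 0$. Theorem \ref{flowdiffeo}(2) gives, for every $R>0$ and $m\geq 1$, convergence in $L^m(\Omega)$ of $\Phi^\epsilon$, $(\Phi^\epsilon)^{-1}$, $D\Phi^\epsilon$ and $D(\Phi^\epsilon)^{-1}$ to their limits, uniformly on $[0,T]\times\{|x|\leq R\}$; combined with the locally uniform convergence $\mathbf{B}_0^\epsilon\to\mathbf{B}_0$ this yields $\sup_{t\in[0,T],\,|x|\leq R}E[|\mathbf{B}^\epsilon(t,x)-\mathbf{B}(t,x)|^m]\to 0$. Because $\varphi$ has compact support, each pairing appearing in (\ref{vectadv_distrib}) with $\mathbf{B}^\epsilon$ converges in $L^2(\Omega)$ uniformly in $t\in[0,T]$ to the corresponding pairing with $\mathbf{B}$; the deterministic time integrals then pass to the limit by Fubini and dominated convergence, and the It\^o integral by It\^o isometry, so (\ref{vectadv_distrib}) holds for $\mathbf{B}$. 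Finally, passing to the limit in $\langle\mathbf{B}^\epsilon_t,\nabla\psi\rangle=0$ for $\psi\in C_c^\infty(\mathbb{R}^3)$ and using pathwise continuity of $\mathbf{B}$ upgrades divergence-freeness to the a.s.\ pathwise statement of condition (ii). The main obstacle is precisely the uniformity in $t$ (and in $x$ on compacts) of all convergences, which is exactly what Theorem \ref{flowdiffeo}(2) supplies; without this quantitative stability one could not verify divergence-freeness by the direct integration-by-parts used in Proposition \ref{regcase}, since here $D\Phi_t$ is only H\"older continuous in $x$ and $\mathbf{B}_0$ is only continuous.
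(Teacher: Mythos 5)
Your proposal is correct and follows essentially the same route as the paper: mollify $\mathbf{v}$ and $\mathbf{B}_{0}$ (divergence-free), invoke Proposition \ref{regcase} for the representation of the regular solutions $\mathbf{B}^{\epsilon}$, use the stability statements of Theorem \ref{flowdiffeo} to get $L^{m}(\Omega)$ convergence of $\mathbf{B}^{\epsilon}(t,x)$ to $\mathbf{B}(t,x)$ with uniform moment bounds, and pass to the limit term by term in the weak formulation and in the divergence-free identity. The only cosmetic differences are that the paper approximates $\mathbf{B}_{0}$ uniformly (in $C_{b}$) rather than locally uniformly, works with pointwise-in-$(t,x)$ convergence plus dominated convergence rather than uniform-in-$(t,x)$ convergence, and controls the stochastic integral by Burkholder's inequality in $L^{m}$ rather than by the It\^{o} isometry.
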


\begin{proof}
\textbf{Step 1 }(regularity). By definition (\ref{reprform 3}), the
assumption on $\mathbf{B}_{0}$\ and the continuity properties in $\left(
t,x\right) $ of $\Phi _{t}^{-1}(x)$ and $D\Phi _{t}(x)$ it follows that $%
\mathbf{B}\in C([0,T]\times \mathbb{R}^{3};\mathbb{R}^{3})$ with probability
one; since $\Phi _{t}^{-1}(x)$ and $D\Phi _{t}(x)$ are $\mathcal{F}_{t}$
measurable, for every $x$ the process $\mathbf{B}(t,x)$ is adapted to $(%
\mathcal{F}_{t})_{t}$, hence also weakly adapted. It remains to prove
properties (i) and (ii)\ of Definition \ref{def distrib sol}.

\textbf{Step 2 }(property (i)). Let $(\mathbf{v}^{\epsilon })_{\epsilon >0}$
be a family of $C^{1}([0,T];C_{c}^{\infty }(\mathbb{R}^{3};\mathbb{R}^{3}))$
divergence-free vector fields, approximating $\mathbf{v}$ in $%
C([0,T];C_{b}^{\alpha }(\mathbb{R}^{3};\mathbb{R}^{3}))$; let $(\mathbf{B}%
_{0}^{\epsilon })_{\epsilon }$ be a family of $C_{c}^{\infty }(\mathbb{R}%
^{3};\mathbb{R}^{3})$ divergence-free vector fields, approximating $\mathbf{B%
}_{0}$ in $C_{b}(\mathbb{R}^{3};\mathbb{R}^{3})$. We know from Lemma \ref%
{regcase} that, for every $\epsilon >0$, 
\begin{equation}
\mathbf{B}^{\epsilon }(t,x)=D\Phi _{t}^{\epsilon }((\Phi _{t}^{\epsilon
})^{-1}(x))\mathbf{B}_{0}^{\epsilon }((\Phi _{t}^{\epsilon })^{-1}(x))
\end{equation}%
solves equation (\ref{vectadv}), where $\Phi ^{\epsilon }$ is the regular
stochastic flow solving the SDE (\ref{SDE}) with drift $\mathbf{v}^{\epsilon
}$. Let us first show that for every $(t,x)$, $(\mathbf{B}^{\epsilon
}(t,x))_{\epsilon }$ converges to $\mathbf{B}(t,x)$, defined by (\ref%
{reprform 3}), in $L^{m}(\Omega ;\mathbb{R}^{3})$, for every finite $m$.

Fix $(t,x)$ and $m\geq 1$. Using Remark \ref{otherrepr} (which also applies
to $\Phi $, since $\det (D\Phi _{t})=1$), We have%
\begin{eqnarray*}
&&|\mathbf{B}^{\epsilon }(t,x)-\mathbf{B}(t,x)| \\
&\leq &|\mathbf{B}_{0}^{\epsilon }((\Phi _{t}^{\epsilon
})^{-1}(x))||H((D\Phi _{t}^{\epsilon })^{-1}(x))-H(D\Phi _{t}^{-1}(x))| \\
&&+|H((D\Phi _{t}^{\epsilon })^{-1}(x))||\mathbf{B}_{0}^{\epsilon }((\Phi
_{t}^{\epsilon })^{-1}(x))-\mathbf{B}_{0}((\Phi _{t}^{\epsilon })^{-1}(x))|
\\
&&+|H((D\Phi _{t}^{\epsilon })^{-1}(x))||\mathbf{B}_{0}((\Phi _{t}^{\epsilon
})^{-1}(x))-\mathbf{B}_{0}(\Phi _{t}^{-1}(x))|
\end{eqnarray*}%
so, by H\"{o}lder inequality, we get 
\begin{equation}
\lefteqn{E[\left\vert \mathbf{B}^{\epsilon }(t,x)-\mathbf{B}(t,x)\right\vert
^{m}]}  \label{Meps-M}
\end{equation}%
\begin{eqnarray*}
&\leq &C\Vert \mathbf{B}_{0}^{\epsilon }\Vert _{0}E[|H((D\Phi _{t}^{\epsilon
})^{-1}(x))-H(D\Phi _{t}^{-1}(x))|^{m}] \\
&&+CE[|H(D\Phi _{t}^{-1}(x))|^{m}]\Vert \mathbf{B}_{0}^{\epsilon }-\mathbf{B}%
_{0}\Vert _{0} \\
&&+CE[|H(D\Phi _{t}^{-1}(x))|^{2m}]^{1/2}E[|\mathbf{B}_{0}((\Phi
_{t}^{\epsilon })^{-1}(x))-\mathbf{B}_{0}(\Phi _{t}^{-1}(x))|^{2m}]^{1/2}.
\end{eqnarray*}%
We will prove that every term on the right-hand-side of (\ref{Meps-M}) tends
to $0$. First notice that $\Vert \mathbf{B}_{0}^{\epsilon }\Vert _{0}$ and $%
E[|H(D\Phi _{t}^{-1}(x)|^{m}]$ are bounded uniformly in $\epsilon $, for
every $m$, since $H$ is a polynomial function. The convergence of $\Vert 
\mathbf{B}_{0}^{\epsilon }-\mathbf{B}_{0}\Vert _{0}$ is ensured by our
assumptions, that of $|\mathbf{B}_{0}((\Phi _{t}^{\epsilon })^{-1}(x))-%
\mathbf{B}_{0}(\Phi _{t}^{-1}(x))|$ by Theorem \ref{flowdiffeo} and
dominated convergence theorem ($\mathbf{B}_{0}$ is bounded). Also the
convergence of $|H(D(\Phi _{t}^{\epsilon })^{-1}(x))-H(D\Phi _{t}^{-1}(x))|$
in $L^{m}(\Omega )$ is a consequence of Theorem \ref{flowdiffeo} and the
fact that $H$ is a polynomial. To see this in detail, we can write this term
as 
\begin{align*}
\lefteqn{H(D(\Phi _{t}^{\epsilon })^{-1}(x))-H(D\Phi _{t}^{-1}(x))} \\
& =\sum_{i,j=1}^{3}\left( \int_{0}^{1}\frac{\partial H}{\partial x_{ij}}(\xi
D(\Phi _{t}^{\epsilon })^{-1}(x)+(1-\xi )D\Phi _{t}^{-1}(x))d\xi \right)
\left( D(\Phi _{t}^{\epsilon })^{-1}(x)-D\Phi _{t}^{-1}(x)\right) _{ij}.
\end{align*}%
The function $H^{\prime }$ is linear (because $H$ is quadratic), so we can
use H\"{o}lder inequality and get 
\begin{align*}
\lefteqn{E[|H((D\Phi _{t}^{\epsilon })^{-1}(x))-H(D\Phi _{t}^{-1}(x))|^{m}]}
\\
& \leq CE[|(D\Phi _{t}^{\epsilon })^{-1}(x)|^{2m}+|D\Phi
_{t}^{-1}(x)|^{2m}]^{1/2}E[|(D\Phi _{t}^{\epsilon })^{-1}(x)-D\Phi
_{t}^{-1}(x)|^{2m}]^{1/2}.
\end{align*}%
The term $E[|(D\Phi _{t}^{\epsilon })^{-1}(x)|^{2m}+|D\Phi
_{t}^{-1}(x)|^{2m}]$ is uniformly bounded in $\epsilon $, thanks to Theorem %
\ref{flowdiffeo}, and the term $E[|(D\Phi _{t}^{\epsilon })^{-1}(x)-D\Phi
_{t}^{-1}(x)|^{2m}]$ tends to $0$ by Theorem \ref{flowdiffeo}. Putting all
together, we get convergence of $\mathbf{B}^{\epsilon }(t,x)$ to $\mathbf{B}%
(t,x)$ in $L^{m}(\Omega )$.

Now, with the help of this convergence, we may prove that $\mathbf{B}$
solves equation (\ref{vectadv_distrib}). We know that (\ref{vectadv_distrib}%
) is satisfied by $\mathbf{B}^{\epsilon }$, pointwise and thus in the
distributional form (formula (\ref{vectadv_distrib})), by integration by
parts. Let us prove that, for every $\varphi $ in $C_{c}^{\infty }(\mathbb{R}%
^{3};\mathbb{R}^{3})$, for every $t$, every term of (\ref{vectadv_distrib})
for $\mathbf{B}^{\epsilon }$ converges in $L^{m}(\Omega )$,for any fixed
finite $m$, to the corresponding term for $\mathbf{B}$. We will use the
previous convergence result and the uniform estimate 
\begin{equation}
\sup_{\epsilon }\sup_{t\in \lbrack 0,T]}\sup_{\left\vert x\right\vert \leq
R}E[|\mathbf{B}^{\epsilon }(t,x)|^{m}]<+\infty ,\ \ \sup_{t\in \lbrack
0,T]}\sup_{\left\vert x\right\vert \leq R}E[|\mathbf{B}(t,x)|^{m}]<+\infty
\end{equation}%
which again follows from Theorem \ref{flowdiffeo}. Take the term $\langle 
\mathbf{B}_{t},\varphi \rangle $. Since 
\begin{equation}
E[|\langle \mathbf{B}_{t}^{\epsilon }-\mathbf{B}_{t},\varphi \rangle
|^{m}]\leq C\langle E[|\mathbf{B}_{t}^{\epsilon }-\mathbf{B}%
_{t}|^{m}],|\varphi |^{m}\rangle ,
\end{equation}%
(in the last term $\left\langle .,.\right\rangle $ denotes the scalar
product in $L^{2}\left( \mathbb{R}^{3}\right) $ between real-valued
functions, not vector fields as usual), $\mathbf{B}^{\epsilon }(t,x)$ tends
to $\mathbf{B}(t,x)$ in $L^{m}(\Omega )$ for every $x$ and $E[|\mathbf{B}%
_{t}^{\epsilon }|^{m}+|\mathbf{B}_{t}|^{m}]$ is bounded uniformly in $%
\epsilon $ and $x$, the convergence of this term follows from dominated
convergence theorem. Similarly one can prove the convergence of the terms $%
\int_{0}^{t}\langle \mathbf{B}_{r},\Delta \varphi \rangle dr$ and $%
\int_{0}^{t}\langle (D\varphi )e_{k},\mathbf{B}_{r}\rangle dW_{r}^{k}$, $%
k=1,2,3$, the last ones using Burkholder inequality 
\begin{equation}
E\left[ \left\vert \int_{0}^{t}\langle (D\varphi )e_{k},\mathbf{B}%
_{r}^{\epsilon }-\mathbf{B}_{r}\rangle dW_{r}^{k}\right\vert ^{m}\right]
\leq C\int_{0}^{t}\langle |(D\varphi )e_{k}|^{m},E[|\mathbf{B}_{r}^{\epsilon
}-\mathbf{B}_{r}|^{m}]\rangle dr.
\end{equation}%
For the last term, $\int_{0}^{t}\langle (D\varphi )^{A}\mathbf{v}_{r},%
\mathbf{B}_{r}\rangle dr$, we have 
\begin{align*}
\lefteqn{E\left[ \left\vert \int_{0}^{t}\left\langle (D\varphi )^{A}\mathbf{v%
}_{r}^{\epsilon },\mathbf{B}_{r}^{\epsilon }\right\rangle
dr-\int_{0}^{t}\left\langle (D\varphi )^{A}\mathbf{v}_{r},\mathbf{B}%
_{r}\right\rangle dr\right\vert ^{m}\right] } \\
& \leq C\int_{0}^{t}\langle |D\varphi |^{m}|\mathbf{v}_{r}^{\epsilon }-%
\mathbf{v}_{r}|^{m},E[|\mathbf{B}_{r}^{\epsilon }|^{m}]\rangle
dr+C\int_{0}^{t}\langle |D\varphi |^{m}|\mathbf{v}_{r}|^{m},E[|\mathbf{B}%
_{r}^{\epsilon }-\mathbf{B}_{r}|^{m}]\rangle dr.
\end{align*}%
Both the two addends in the right-hand-side of this equation tend to $0$ by
dominated convergence theorem, because $\mathbf{v}^{\epsilon }\rightarrow 
\mathbf{v}$ and $E[|\mathbf{B}_{r}^{\epsilon }-\mathbf{B}_{r}|^{m}]%
\rightarrow 0$ for every $(t,x)$ and $|\mathbf{v}^{\epsilon }|+|\mathbf{v}|$%
, $E[|\mathbf{B}_{t}^{\epsilon }|^{m}+|\mathbf{B}_{t}|^{m}]$ are uniformly
bounded. Since all the terms of (\ref{vectadv_distrib}) converge, (\ref%
{vectadv_distrib}) holds for $\mathbf{B}$. Thus $\mathbf{B}$ solves (\ref%
{vectadv}) in the sense of distributions.

\textbf{Step 3 }(property (ii)). Concerning property (ii) of Definition \ref%
{def distrib sol}, we will prove that, for every $t$, for every $\varphi $
in $C_{c}^{\infty }(\mathbb{R}^{3};\mathbb{R}^{3})$, 
\begin{equation}
E\left[ \int_{\mathbb{R}^{3}}\mathbf{B}_{t}\cdot \nabla \varphi dx\right] =0.
\label{Mdivfree}
\end{equation}%
Since, for a.e.\ $\omega $, $\mathbf{B}$ is continuous in $(t,x)$ and since $%
C_{c}^{\infty }(\mathbb{R}^{3};\mathbb{R}^{3})$ is separable, (\ref{Mdivfree}%
) implies that, outside a negligible set in $\Omega $, $B_{t}$ is
divergence-free for every $t$. We know that (\ref{Mdivfree}) is satisfied
for $\mathbf{B}^{\epsilon }$ and that $\mathbf{B}^{\epsilon }$ tends to $%
\mathbf{B}$ in $L^{m}(\Omega )$, for every $m$, with $L^{m}$-norm bounded
uniformly in $x$ (in a ball). Then, applying dominated convergence theorem
as before, we get (\ref{Mdivfree}). The proof is complete.
\end{proof}

Finally, let us prove that the solution given by the previous theorem is
unique.

\begin{lemma}
\label{lemma uniqueness}Let $\mathbf{B}_{0}$ be divergence-free and in $C(%
\mathbb{R}^{3};\mathbb{R}^{3})$ and suppose Condition \ref{Holdercond} hold.
The there is at most one continuous weak solution to equation (\ref{vectadv}%
), given by formula (\ref{reprform 3}).
\end{lemma}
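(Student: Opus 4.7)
By linearity of equation (\ref{vectadv_distrib}), it suffices to prove that any continuous weak solution $\mathbf{B}$ with $\mathbf{B}_{0}=0$ is identically zero; this forces every solution to coincide with the one given by (\ref{reprform 3}) in Lemma \ref{lemma_reprformula}. Fix such a null-initial $\mathbf{B}$ and a standard radial mollifier $\rho _{\epsilon }$. Testing the weak formulation (\ref{vectadv_distrib}) against $\rho _{\epsilon }(x-\cdot )\mathbf{e}_{j}$ for each $j$ and each fixed $x$, one sees that $\mathbf{B}_{s}^{\epsilon }:=\rho _{\epsilon }\ast \mathbf{B}_{s}$ is $C^{\infty }$ in space, a continuous semimartingale in time, and satisfies the pointwise SPDE
\begin{equation*}
d\mathbf{B}^{\epsilon }=\bigl[(\mathbf{B}^{\epsilon }\cdot \nabla )\mathbf{v}-(\mathbf{v}\cdot \nabla )\mathbf{B}^{\epsilon }+r^{\epsilon }\bigr]\,dt-\sigma \sum_{k}\partial _{k}\mathbf{B}^{\epsilon }\,dW^{k}+\frac{\sigma ^{2}}{2}\Delta \mathbf{B}^{\epsilon }\,dt,
\end{equation*}
with commutator $r^{\epsilon }=\rho _{\epsilon }\ast \bigl[(\mathbf{B}\cdot \nabla )\mathbf{v}-(\mathbf{v}\cdot \nabla )\mathbf{B}\bigr]-\bigl[(\mathbf{B}^{\epsilon }\cdot \nabla )\mathbf{v}-(\mathbf{v}\cdot \nabla )\mathbf{B}^{\epsilon }\bigr]$.

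Next I apply the It\^{o}-Kunita-Wentzell formula to $\mathbf{B}^{\epsilon }(t,\Phi _{t}(x))$, reproducing the cancellations of Step 2 of Proposition \ref{regcase}: the transport drift, the noise $\sigma \sum _{k}\partial _{k}\mathbf{B}^{\epsilon }\,dW^{k}$ and the It\^{o} correction $\frac{\sigma ^{2}}{2}\Delta \mathbf{B}^{\epsilon }\,dt$ are exactly absorbed by the It\^{o} terms coming from the flow $\Phi $, leaving
\begin{equation*}
d\bigl[\mathbf{B}^{\epsilon }(t,\Phi _{t}(x))\bigr]=\bigl[(\mathbf{B}^{\epsilon }\cdot \nabla )\mathbf{v}\bigr](t,\Phi _{t}(x))\,dt+r^{\epsilon }(t,\Phi _{t}(x))\,dt.
\end{equation*}
Since $D\mathbf{v}$ is undefined pointwise, I rewrite this identity for $\mathbf{Y}_{t}^{\epsilon }(x):=J_{t}(x)^{-1}\mathbf{B}^{\epsilon }(t,\Phi _{t}(x))$, with $J_{t}=D\Phi _{t}$ given by Theorem \ref{flowdiffeo}. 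After first establishing the computation for a smooth approximation $\mathbf{v}^{\delta }$ of $\mathbf{v}$ and letting $\delta \to 0$ via the stability part of Theorem \ref{flowdiffeo}, the stretching term gets absorbed, and one obtains $d\mathbf{Y}^{\epsilon }=J_{t}^{-1}\,r^{\epsilon }(t,\Phi _{t}(x))\,dt$ with $J_{t}^{-1}$ locally bounded.

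It then remains to show that $\int _{0}^{t}J_{s}(x)^{-1}r^{\epsilon }(s,\Phi _{s}(x))\,ds\to 0$ as $\epsilon \to 0$. The H\"{o}lder regularity of $\mathbf{v}$ is not sufficient for the classical DiPerna-Lions commutator bound; the saving, in the spirit of \cite{NevesOliv}, is the algebraic structure of the equation. Since the drift is $-\mathrm{curl}(\mathbf{v}\times \mathbf{B})$ with both fields divergence-free, a direct computation gives the special cancellation
\begin{equation*}
r^{\epsilon }=-\mathrm{curl}\bigl[(\mathbf{v}\times \mathbf{B})^{\epsilon }-\mathbf{v}\times \mathbf{B}^{\epsilon }\bigr],
\end{equation*}
i.e.\ a curl of a single cross-product mollification commutator rather than of two uncorrelated first-order commutators. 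The integrand equals $\int \rho _{\epsilon }(\cdot -y)\bigl[\mathbf{v}(y)-\mathbf{v}(\cdot )\bigr]\times \mathbf{B}(y)\,dy$, hence is of order $\epsilon ^{\alpha }[\mathbf{v}]_{\alpha }\Vert \mathbf{B}\Vert _{0}$; once inserted into $\int _{0}^{t}J_{s}^{-1}r^{\epsilon }(s,\Phi _{s}(x))\,ds$, the outer curl can be transferred by integration by parts onto the $C^{1,\alpha ^{\prime }}$-regular change of variables provided by the flow $\Phi $ (Theorem \ref{flowdiffeo}, which is available exactly because of the additive noise). The blow-up $\epsilon ^{-1}$ coming from the curl is thus reabsorbed, and the whole quantity goes to $0$ locally uniformly in $x$ in probability. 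Together with $\mathbf{B}_{0}^{\epsilon }\to 0$ this forces $\mathbf{Y}_{t}^{\epsilon }(x)\to 0$, so $\mathbf{B}(t,\Phi _{t}(x))=0$ for every $t$ and a.e.\ $x$ almost surely; continuity of $\mathbf{B}$ and invertibility of $\Phi _{t}$ yield $\mathbf{B}\equiv 0$. The hardest step is precisely this last commutator analysis: the missing Sobolev regularity of $\mathbf{v}$ is replaced by the antisymmetric $\mathbf{v}\times \mathbf{B}$ structure of the stretching operator, combined with the noise-induced regularity of the stochastic flow.
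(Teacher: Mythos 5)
Your overall strategy is the paper's: mollify $\mathbf{B}$, compose with the stochastic flow, renormalize by $(D\Phi _{t})^{-1}$, and exploit the curl/divergence-free structure so that the commutator is the curl of a quantity of size $O(\epsilon ^{\alpha })$ whose derivative is then unloaded onto the $C^{1,\alpha ^{\prime }}$ flow. Your observation that pairing a curl with $\psi $ produces only the antisymmetric part of $D\psi $, in which the uncontrolled second derivatives of the flow cancel by symmetry of mixed partials, is exactly the paper's key cancellation. However, two steps as written have genuine gaps. First, you apply the It\^{o}--Kunita--Wentzell formula to $\mathbf{B}^{\epsilon }(t,\Phi _{t}(x))$ with the flow of the \emph{non-smooth} $\mathbf{v}$, and then need $\frac{d}{dt}(D\Phi _{t})^{-1}=-(D\Phi _{t})^{-1}D\mathbf{v}(t,\Phi _{t}(x))$ to absorb the stretching term; both this identity and your drift term $(\mathbf{B}^{\epsilon }\cdot \nabla )\mathbf{v}$ involve $D\mathbf{v}$ pointwise, which is only a distribution. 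Your proposed fix (do the computation for $\mathbf{v}^{\delta }$ and let $\delta \rightarrow 0$) does not obviously close: $\mathbf{B}$ does not solve the equation with drift $\mathbf{v}^{\delta }$, so a second remainder appears, and $(\mathbf{B}^{\epsilon }\cdot \nabla )\mathbf{v}^{\delta }(t,\Phi _{t}^{\delta }(x))$ has no pointwise limit as $\delta \rightarrow 0$. The paper avoids this entirely by working with the smooth flow $\Phi ^{\epsilon }$ of $\mathbf{v}^{\epsilon }$ (the same $\epsilon $ as the mollification of $\mathbf{B}$) and by keeping the drift in the form $((\mathbf{B}\cdot \nabla )\mathbf{v})^{\epsilon }-((\mathbf{v}\cdot \nabla )\mathbf{B})^{\epsilon }$, i.e.\ the mollification of the whole distributional drift, which is an honest smooth function; every identity is then exact at fixed $\epsilon $, and a single limit is taken at the very end.

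Second, your claim that $\int _{0}^{t}J_{s}^{-1}r^{\epsilon }(s,\Phi _{s}(x))\,ds\rightarrow 0$ \emph{locally uniformly in $x$} cannot be reached by the mechanism you invoke. The integration by parts that transfers the outer curl onto the flow is an integration in the space variable: it becomes available only after multiplying by a test function $\varphi (x)$, integrating $dx$, and changing variables using measure preservation of $\Phi _{t}^{\epsilon }$, which produces the field $\psi ^{\epsilon }=D(\Phi ^{\epsilon })^{-1}\varphi ((\Phi ^{\epsilon })^{-1})$ and the identity (\ref{to be converged}). For fixed $x$ the quantity $r^{\epsilon }(s,\Phi _{s}(x))$ is genuinely of order $\epsilon ^{\alpha -1}$ and there is nothing to integrate against. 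What one actually obtains in the limit is the weak identity $\int (D\Phi _{t})^{-1}\mathbf{B}(t,\Phi _{t}(x))\varphi (x)\,dx=0$ for all $\varphi $, which, combined with the continuity of $\mathbf{B}$ and the invertibility of $\Phi _{t}$, still yields $\mathbf{B}\equiv 0$; so the conclusion survives, but the pointwise formulation of your limit is incorrect and obscures the step where the entire difficulty sits. A further point you omit: the support of $x\mapsto \psi ^{\epsilon }(t,x)$ is random, and one must show it is contained in a ball of random radius uniform in $\epsilon $ and $t$ (the paper's Step 3) before the dominated-convergence and $L_{loc}^{m}$ arguments apply.
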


\begin{proof}
\textbf{Step 1} (origin of the proof). Since the equation is linear, it is
sufficient to consider the case $\mathbf{B}_{0}=0$ and prove that, if $%
\mathbf{B}$ is a continuous weak solution to equation (\ref{vectadv}) with $%
\mathbf{B}_{0}=0$, then $\mathbf{B}=0$.

In Proposition \ref{regcase} we proved, by It\^{o}-Kunita-Wentzell formula,
that a regular solution $\mathbf{B}$ satisfies the identity 
\begin{equation*}
\frac{d}{dt}[\mathbf{B}(t,\Phi _{t}(x))]=\left( \mathbf{B}(t,\Phi
_{t}(x))\cdot \nabla \right) {\mathbf{v}}(t,\Phi _{t}(x))
\end{equation*}%
and thus, by uniqueness for equation (\ref{derivSDE}), we got $\mathbf{B}%
(t,\Phi _{t}(x))=D\Phi _{t}(x)\mathbf{B}_{0}(x)$, namely $\mathbf{B}(t,\Phi
_{t}(x))=0$ in the present case (hence $\mathbf{B}=0$). We may also go
further and drop the step involving equation (\ref{derivSDE}): it is
sufficient to differentiate $\left( D\Phi _{t}(x)\right) ^{-1}\mathbf{B}%
(t,\Phi _{t}(x))$:%
\begin{equation*}
\frac{d}{dt}\left[ \left( D\Phi _{t}(x)\right) ^{-1}\mathbf{B}(t,\Phi
_{t}(x))\right] =0
\end{equation*}%
which readily implies $\left( D\Phi _{t}(x)\right) ^{-1}\mathbf{B}(t,\Phi
_{t}(x))=\mathbf{B}_{0}(x)=0$, hence $\mathbf{B}(t,\Phi _{t}(x))=0$ and thus 
$\mathbf{B}=0$. We have used the fact that%
\begin{equation*}
\frac{d}{dt}\left( D\Phi _{t}(x)\right) ^{-1}=-\left( D\Phi _{t}(x)\right)
^{-1}D{\mathbf{v}}(t,\Phi _{t}(x))
\end{equation*}%
which comes from the computation (in the regular case)%
\begin{eqnarray*}
\frac{d}{dt}\left( D\Phi _{t}(x)\right) ^{-1} &=&\lim_{h\rightarrow 0}\frac{%
\left( D\Phi _{t+h}(x)\right) ^{-1}-\left( D\Phi _{t}(x)\right) ^{-1}}{h} \\
&=&\lim_{h\rightarrow 0}\left( D\Phi _{t+h}(x)\right) ^{-1}\frac{\left(
D\Phi _{t}(x)-D\Phi _{t+h}(x)\right) }{h}\left( D\Phi _{t}(x)\right) ^{-1} \\
&=&-\left( D\Phi _{t}(x)\right) ^{-1}\frac{d}{dt}D\Phi _{t}(x)\left( D\Phi
_{t}(x)\right) ^{-1} \\
&=&-\left( D\Phi _{t}(x)\right) ^{-1}D\mathbf{v}(t,\Phi _{t}(x)).
\end{eqnarray*}

These are proofs of uniqueness for regular solutions. If $\mathbf{B}$\ is
only a continuous weak solution, It\^{o}-Kunita-Wentzell formula cannot be
applied. Moreover, $D${$\mathbf{v}$} is a distribution, hence everywhere it
enters the computations it may cause troubles (for instance, the meaning of
equation (\ref{derivSDE}) is less clear; although in mild form it is
meaningful because $D\Phi _{t}(x)$, which exists also in the non-regular
case, is formally its fundamental solution).

Thus we regularize both $\mathbf{B}$ and the flow $\Phi _{t}(x)$. Usually,
with this procedure, the regularized field $\mathbf{B}^{\epsilon }$
satisfies an equation similar to (\ref{derivSDE}) but with a remainder, a
commutator; this has been a successful procedure for linear transport
equations with non-smooth coefficients, see \cite{DiPernaLions};\ in the
stochastic case one has a commutator composed with the flow and the approach
works again well due to variants of the commutator lemma, see \cite%
{FlaGubPri}. The commutator estimates are the central tool in this approach,
both deterministic and stochastic. When special cancellations apply, in
particular due to divergence free conditions, it is possible to follow an
interesting variant of this approach, not based on commutator estimates,
developed by \cite{NevesOliv}. We follow this approach and exploit special
cancellations; in absence of them, the vectorial case proper of this paper
could not be treated (see below the argument about second space derivatives
of the flow).

\textbf{Step 2} (approximation). Let $\rho $ be a $C^{\infty }$ compactly
supported even function on $\mathbb{R}^{3}$ and define the approximations of
identity as $\rho _{\epsilon }(x):=\epsilon ^{-3}\rho (\epsilon ^{-1}x)$,
for $\epsilon >0$. Call $\mathbf{B}^{\epsilon }=\mathbf{B}\ast \rho
_{\epsilon }$, $\mathbf{v}^{\epsilon }=\mathbf{v}\ast \rho _{\epsilon }$
(and similarly for other fields). Then, using $\rho ^{\epsilon }$ as test
function, we get the following equation for $\mathbf{B}^{\epsilon }$,
satisfied pointwise (actually, for a.e.\ $\omega $, for every $(t,x)$, up to
a suitable modification): 
\begin{eqnarray*}
\mathbf{B}^{\epsilon }(t,x) &=&\int_{0}^{t}[(\left( {\mathbf{B}}\cdot \nabla
\right) \mathbf{v})^{\epsilon }(r,x)-(\left( {\mathbf{v}}\cdot \nabla
\right) \mathbf{B})^{\epsilon }(r,x)]dr \\
&&-\sigma \sum_{k=1}^{3}\int_{0}^{t}\partial _{k}\mathbf{B}^{\epsilon
}(r,x)dW_{r}^{k}+\frac{\sigma ^{2}}{2}\int_{0}^{t}\Delta \mathbf{B}%
^{\epsilon }(r,x)dr
\end{eqnarray*}%
where we have used the fact that $\mathbf{B}_{0}^{\epsilon }=0$. Let $\Phi
_{t}^{\epsilon }(x)$ be the regular flow associated to $\mathbf{v}^{\epsilon
}$. Since $\mathbf{B}^{\epsilon }$ is regular, we can now apply It\^{o}%
-Kunita-Wentzell formula to $\mathbf{B}^{\epsilon }(t,\Phi _{t}^{\epsilon
}(x))$ and get (as in the proof of Proposition \ref{regcase}):%
\begin{eqnarray*}
d[\mathbf{B}^{\epsilon }(t,\Phi _{t}^{\epsilon }(x))] &=&\left( d\mathbf{B}%
^{\epsilon }\right) (t,\Phi _{t}^{\epsilon }(x)) \\
&&+\sum_{i=1}^{3}\partial _{x_{i}}\mathbf{B}^{\epsilon }(t,\Phi
_{t}^{\epsilon }(x))d\left( \Phi _{t}^{\epsilon }\right) ^{i}(x) \\
&&+\frac{\sigma ^{2}}{2}\Delta \mathbf{B}^{\epsilon }(t,\Phi _{t}^{\epsilon
}(x))dt-\sigma ^{2}\Delta \mathbf{B}^{\epsilon }(t,\Phi _{t}^{\epsilon
}(x))dt
\end{eqnarray*}%
\begin{eqnarray*}
&=&\left[ (\left( {\mathbf{B}}\cdot \nabla \right) \mathbf{v})^{\epsilon
}-(\left( {\mathbf{v}}\cdot \nabla \right) \mathbf{B})^{\epsilon }(t,\Phi
_{t}^{\epsilon }(x))\right] dt+\frac{\sigma ^{2}}{2}\Delta \mathbf{B}%
^{\epsilon }(t,\Phi _{t}^{\epsilon }(x))dt-\sigma \sum_{k=1}^{3}\partial _{k}%
\mathbf{B}^{\epsilon }(t,\Phi _{t}^{\epsilon }(x))dW_{t}^{k} \\
&&+\left( \mathbf{v}^{\epsilon }\cdot \nabla \right) \mathbf{B}^{\epsilon
}(t,\Phi _{t}^{\epsilon }(x))dt+\sigma \sum_{k=1}^{3}\partial _{k}\mathbf{B}%
^{\epsilon }(t,\Phi _{t}^{\epsilon }(x))dW_{t}^{k} \\
&&-\frac{\sigma ^{2}}{2}\Delta \mathbf{B}^{\epsilon }(t,\Phi _{t}^{\epsilon
}(x))dt
\end{eqnarray*}%
\begin{equation*}
=\left( \left( {\mathbf{B}}\cdot \nabla \right) \mathbf{v})^{\epsilon
}-(\left( {\mathbf{v}}\cdot \nabla \right) \mathbf{B})^{\epsilon }\right)
(t,\Phi _{t}^{\epsilon }(x))dt+\left( \mathbf{v}^{\epsilon }\cdot \nabla
\right) \mathbf{B}^{\epsilon }(t,\Phi _{t}^{\epsilon }(x))dt.
\end{equation*}%
Since $\frac{d}{dt}\left( D\Phi _{t}^{\epsilon }(x)\right) ^{-1}=-\left(
D\Phi _{t}^{\epsilon }(x)\right) ^{-1}D\mathbf{v}^{\epsilon }(t,\Phi
_{t}^{\epsilon }(x))$, we get%
\begin{eqnarray*}
&&\frac{d}{dt}\left[ \left( D\Phi _{t}^{\epsilon }(x)\right) ^{-1}\mathbf{B}%
^{\epsilon }(t,\Phi _{t}^{\epsilon }(x))\right] \\
&=&\left( D\Phi _{t}^{\epsilon }(x)\right) ^{-1}\left[ (\left( {\mathbf{B}}%
\cdot \nabla \right) \mathbf{v})^{\epsilon }-(\left( {\mathbf{v}}\cdot
\nabla \right) \mathbf{B})^{\epsilon }+\left( \mathbf{v}^{\epsilon }\cdot
\nabla \right) \mathbf{B}^{\epsilon }-\left( \mathbf{B}^{\epsilon }\cdot
\nabla \right) \mathbf{v}^{\epsilon }\right] (t,\Phi _{t}^{\epsilon }(x)).
\end{eqnarray*}

Fix $\varphi $ in $C_{c}^{\infty }(\mathbb{R}^{3};\mathbb{R}^{3})$. We
multiply the previous formula by $\varphi $, integrate in space and change
variable $x=\Phi _{t}^{\epsilon }(x^{\prime })$ recalling that $\Phi
_{t}^{\epsilon }$ is measure preserving:%
\begin{equation*}
\int_{\mathbb{R}^{3}}\left( D\Phi _{t}^{\epsilon }(x)\right) ^{-1}\mathbf{B}%
^{\epsilon }(t,\Phi _{t}^{\epsilon }(x))\varphi \left( x\right) dx
\end{equation*}%
\begin{equation*}
=\int_{0}^{t}\int_{\mathbb{R}^{3}}\left[ (\left( {\mathbf{B}}\cdot \nabla
\right) \mathbf{v})^{\epsilon }-(\left( {\mathbf{v}}\cdot \nabla \right) 
\mathbf{B})^{\epsilon }+\left( \mathbf{v}^{\epsilon }\cdot \nabla \right) 
\mathbf{B}^{\epsilon }-\left( \mathbf{B}^{\epsilon }\cdot \nabla \right) 
\mathbf{v}^{\epsilon }\right] (s,x)\psi ^{\epsilon }\left( s,x\right) dxds
\end{equation*}%
where we have introduced the random field%
\begin{equation*}
\psi ^{\epsilon }\left( s,x\right) :=\left( D\Phi _{s}^{\epsilon }((\Phi
_{s}^{\epsilon })^{-1}\left( x\right) )\right) ^{-1}\varphi \left( (\Phi
_{s}^{\epsilon })^{-1}\left( x\right) \right) .
\end{equation*}%
By integration by parts we get: 
\begin{equation}
\int_{\mathbb{R}^{3}}\left( D\Phi _{t}^{\epsilon }(x)\right) ^{-1}\mathbf{B}%
^{\epsilon }(t,\Phi _{t}^{\epsilon }(x))\varphi \left( x\right)
dx=-\sum_{i,j=1}^{3}\int_{0}^{t}\int_{\mathbb{R}^{3}}\left[
(v^{j}B^{i})^{\epsilon }-v^{\epsilon ,j}B^{\epsilon ,i}\right] (s,x)(D\psi
^{\epsilon })_{ij}^{A}(s,x)dxds.  \label{to be converged}
\end{equation}

\textbf{Step 3} (support and convergence of $\psi ^{\epsilon }$). In the
next step we need a technical fact about the support of $x\mapsto \psi
^{\epsilon }\left( t,x,\omega \right) $. Let $R^{\prime }>0$ be such that
the support of $\varphi $ is contained in $B\left( 0,R^{\prime }\right) $.
Define $R^{\epsilon }\left( \omega \right) $ as%
\begin{equation*}
R_{t}^{\epsilon }\left( \omega \right) =\max_{x\in B\left( 0,R^{\prime
}\right) }\left\vert \Phi _{t}^{\epsilon }(x,\omega )\right\vert .
\end{equation*}%
Then the support of $x\mapsto \psi ^{\epsilon }\left( t,x,\omega \right) $
is contained in $\overline{B\left( 0,R_{t}^{\epsilon }\left( \omega \right)
\right) }$. We have 
\begin{equation*}
\Phi _{t}^{\epsilon }(x,\omega )=x+\int_{0}^{t}\mathbf{v}^{\epsilon }\left(
s,\Phi _{s}^{\epsilon }(x,\omega )\right) ds+\sigma \mathbf{W}_{t}\left(
\omega \right)
\end{equation*}%
and there is a constant $C>0$ such that $\left\vert \mathbf{v}^{\epsilon
}\left( s,\Phi _{s}^{\epsilon }((\Phi _{t}^{\epsilon })^{-1}\left( x,\omega
\right) ,\omega )\right) \right\vert \leq C$; thus%
\begin{equation*}
\left\vert \Phi _{t}^{\epsilon }(x,\omega )\right\vert \leq \left\vert
x\right\vert +Ct+\sigma \max_{t\in \left[ 0,T\right] }\left\vert \mathbf{W}%
_{t}\left( \omega \right) \right\vert .
\end{equation*}%
It implies that%
\begin{equation*}
R_{t}^{\epsilon }\left( \omega \right) \leq \overline{R}\left( \omega
\right) :=R^{\prime }+CT+\sigma \max_{t\in \left[ 0,T\right] }\left\vert 
\mathbf{W}_{t}\left( \omega \right) \right\vert
\end{equation*}%
for all $\epsilon >0$, $t\in \left[ 0,T\right] $. The r.v. $\overline{R}%
\left( \omega \right) $ is finite a.s. and thus we have proved that the
function $x\mapsto \psi ^{\epsilon }\left( t,x,\omega \right) $ has a random
support which is contained in $B\left( 0,\overline{R}\left( \omega \right)
\right) $ for all $\epsilon >0$, $t\in \left[ 0,T\right] $, with probability
one. The same result is true replacing $\Phi _{t}^{\epsilon }(x,\omega )$
with $\Phi _{t}(x,\omega )$.

About the convergence of $\psi ^{\epsilon }$, we shall use the following
fact: for a.e.\ $\omega $, possibly passing to a subsequence, $\psi
^{\epsilon }\left( t,\cdot ,\omega \right) $ tends to $\psi \left( t,\cdot
,\omega \right) $ in $L_{loc}^{m}(\mathbb{R}^{3})$ and $\psi ^{\epsilon
}\left( \cdot ,\cdot ,\omega \right) $ tends to $\psi \left( \cdot ,\cdot
,\omega \right) $ in $L_{loc}^{m}([0,T]\times \mathbb{R}^{3})$, for every
finite $m$. Indeed, first notice that $\left( D\Phi _{s}^{\epsilon }((\Phi
_{s}^{\epsilon })^{-1}\left( x\right) )\right) ^{-1}=D(\Phi _{s}^{\epsilon
})^{-1}\left( x\right) $, so that 
\begin{equation*}
\psi ^{\epsilon }\left( s,x\right) =D(\Phi _{s}^{\epsilon })^{-1}\left(
x\right) \varphi \left( (\Phi _{s}^{\epsilon })^{-1}\left( x\right) \right) .
\end{equation*}%
By Theorem \ref{flowdiffeo} and standard arguments like in the proof of
Lemma \ref{lemma_reprformula}, Step 2, $\psi ^{\epsilon }\left( t,\cdot
,\cdot \right) $ converges in $L^{m}(B_{R}\times \Omega )$ for every finite $%
m$ and every $R>0$; this implies that, for a.e.\ $\omega $, possibly passing
to a subsequence, for a.e.\ $\omega $ it converges in $L^{m}(B_{R})$ for
every finite $m$ and every $R>0$; by a diagonal procedure we can choose this
subsequence independently of $m$ and $R$. The proof of the convergence of $%
\psi ^{\epsilon }\left( \cdot ,\cdot ,\omega \right) $ in $%
L_{loc}^{m}([0,T]\times \mathbb{R}^{3})$ is similar.

\textbf{Step 4} (passage to the limit). Now we fix $t>0$ and let $\epsilon $
go to $0$ in formula (\ref{to be converged}). We will prove we obtain in the
limit 
\begin{equation}
\int_{\mathbb{R}^{3}}\left( D\Phi _{t}(x)\right) ^{-1}\mathbf{B}(t,\Phi
_{t}(x))\varphi \left( x\right) dx=0  \label{equivRF}
\end{equation}%
which implies $\mathbf{B}=0$ as already explained above.

The term on the left-hand-side of (\ref{to be converged}) converges,
possibly up to subsequences, to the one on the left-hand-side of (\ref%
{equivRF}). Indeed, by the change variable $x=\Phi _{t}^{\epsilon
}(x^{\prime })$ and the support result of the previous step we have (recall
that $\overline{R}$ is random but independent of $\epsilon >0$) 
\begin{eqnarray*}
\int_{\mathbb{R}^{3}}\left( D\Phi _{t}^{\epsilon }(x^{\prime })\right) ^{-1}%
\mathbf{B}^{\epsilon }(t,\Phi _{t}^{\epsilon }(x^{\prime }))\varphi \left(
x^{\prime }\right) dx^{\prime } &=&\int_{\mathbb{R}^{3}}\mathbf{B}^{\epsilon
}(t,x)\psi ^{\epsilon }\left( t,x\right) dx \\
&=&\int_{B\left( 0,\overline{R}\right) }\mathbf{B}^{\epsilon }(t,x)\psi
^{\epsilon }\left( t,x\right) dx
\end{eqnarray*}%
With probability one, for every $R>0$ the function $\mathbf{B}^{\epsilon
}(t,x)$ converges to $\mathbf{B}(t,x)$ uniformly on $\left[ 0,T\right]
\times B\left( 0,R\right) $, by classical mollifiers arguments. We have seen
in Step 3 that, for a.e.\ $\omega $, possibly passing to a subsequence, $%
\psi ^{\epsilon }\left( t,\cdot ,\omega \right) $ tends to $\psi \left(
t,\cdot ,\omega \right) $ in $L_{loc}^{1}(\mathbb{R}^{3})$. Hence we may
pass to the limit in $\int_{B\left( 0,\overline{R}\right) }\mathbf{B}%
^{\epsilon }(t,x)\psi ^{\epsilon }\left( t,x\right) dx$, for a.e.\ $\omega $%
; the limit is $\int_{B\left( 0,\overline{R}\right) }\mathbf{B}(t,x)\psi
\left( t,x\right) dx$ which gives the left-hand-side of (\ref{equivRF}) by
going backwards with the same computations.

Let us consider now the term on the right-hand-side of (\ref{to be converged}%
); we want to prove that it converges to zero. It is not difficult to show
that, for a.e.\ $\omega $, both $(v^{j}B^{i})^{\epsilon }$ and $v^{\epsilon
,j}B^{\epsilon ,i}$ converge to $v^{j}B^{i}$ in $C([0,T]\times \mathbb{R}%
^{3})$ (namely, uniformly on compact sets) so $(v^{j}B^{i})^{\epsilon
}-v^{\epsilon ,j}B^{\epsilon ,i}$ tends to $0$ in that space. The term $%
(D\psi ^{\epsilon })_{ij}^{A}(s,x)$ could look problematic at a first view,
since it seems to involve the second derivatives of the flow $\Phi
^{\epsilon }$, which are not under control. But this is not the case,
because we only need the antisymmetric part of the derivative. Indeed,
differentiating $\psi ^{\epsilon }=(D((\Phi ^{\epsilon })^{-1}))^{T}\varphi
((\Phi ^{\epsilon })^{-1})$, we get 
\begin{equation*}
(D\psi ^{\epsilon })_{ij}=\sum_{k=1}^{3}\partial _{j}\partial _{i}((\Phi
^{\epsilon })^{-1})^{k}\varphi _{k}((\Phi ^{\epsilon
})^{-1})+\sum_{k=1}^{3}\partial _{i}((\Phi ^{\epsilon })^{-1})^{k}\partial
_{i}[\varphi _{k}((\Phi ^{\epsilon })^{-1})].
\end{equation*}%
The possible problem is only with the first addend. Its antisymmetric part
however is 
\begin{equation*}
\sum_{k=1}^{3}\partial _{j}\partial _{i}((\Phi ^{\epsilon
})^{-1})^{k}\varphi _{k}((\Phi ^{\epsilon })^{-1})-\sum_{k=1}^{3}\partial
_{i}\partial _{j}((\Phi ^{\epsilon })^{-1})^{k}\varphi _{k}((\Phi ^{\epsilon
})^{-1})=0.
\end{equation*}%
So $(D\psi ^{\epsilon })^{A}$ involves only powers of first derivatives of $%
\Phi ^{\epsilon }$. Hence, using again arguments like in proof of Lemma \ref%
{lemma_reprformula}, up to subsequences, $(D\psi ^{\epsilon })^{A}$
converges to $(D\psi )^{A}$ in $L_{loc}^{1}([0,T]\times B_{R})$, with
probability one. Using again the uniform random support of $\psi ^{\epsilon
} $ we see that the term on the right-hand-side of (\ref{to be converged}),
equal to 
\begin{equation*}
-\sum_{i,j=1}^{3}\int_{0}^{t}\int_{B\left( 0,\overline{R}\right) }\left[
(v^{j}B^{i})^{\epsilon }-v^{\epsilon ,j}B^{\epsilon ,i}\right] (s,x)(D\psi
^{\epsilon })_{ij}^{A}(s,x)dxds
\end{equation*}%
converges to zero, with probability one. Then (\ref{equivRF}) is proved and
the proof is complete.
\end{proof}


\begin{thebibliography}{99}
\bibitem{AryPil} O. Aryasova, A. Pilipenko, On differentiability of
stochastic flow for multidimensional SDE with discontinuous drift,
arXiv:1306.4816.

\bibitem{BFM PAMS} D. Barbato, F. Flandoli, F. Morandin, Uniqueness for a
stochastic inviscid dyadic model, \textit{Proc. Amer. Math. Soc.} \textbf{138%
} (2010), no. 7, 2607--2617.

\bibitem{BFGM} L. Beck, F. Flandoli, M. Gubinelli, M. Maurelli, Stochastic
ODEs and stochastic linear PDEs with critical drift: regularity, duality and
uniqueness, arXiv:1401.1530.

\bibitem{BrzNekl} Z. Brze\'{z}niak, M. Neklyudov, Duality, vector advection
and the Navier-Stokes equations, \textit{Dyn. Partial Differ. Equ.} 6
(2009), no. 1, 53--93.

\bibitem{ChoukGubi} K. Chouk, M. Gubinelli, Nonlinear PDEs with modulated
dispersion, arXiv:1303.0822.

\bibitem{DaPDeb} G. Da Prato, A. Debussche, Ergodicity for the 3D stochastic
Navier-Stokes equations, \textit{J. Math. Pures Appl.} (9) 82 (2003), no. 8,
877--947.

\bibitem{DaPrato Fla} G. Da Prato, F. Flandoli, Pathwise uniqueness for a
class of SDE in Hilbert spaces and applications, \textit{J. Funct. Anal. }%
259 (2010), no. 1, 243-267.

\bibitem{DFPR} G. Da Prato, F. Flandoli, E. Priola, M. R\"{o}ckner, Strong
uniqueness for stochastic evolution equations in Hilbert spaces perturbed by
a bounded measurable drift, \textit{The Annals of Probab.} \textbf{41}
(2013), 3306-3344.

\bibitem{DelarueFlaVincenzi} F. Delarue, F. Flandoli, D. Vincenzi, Noise
prevents collapse of Vlasov-Poisson point charges, to appear on Comm. Pure
Appl. Math.

\bibitem{DT10} A. Debussche, Y. Tsutsumi, 1D quintic nonlinear Schr\"{o}%
dinger equation with white noise dispersion, \textit{J. Math. Pures Appl.}
(9) \textbf{96} (2011), no. 4, 363--376.

\bibitem{DiPernaLions} R. J. DiPerna, P. L.\ Lions, Ordinary differential
equations, transport theory and Sobolev spaces, \textit{Invent. Math.} 
\textbf{98}, 511-547 (1989).

\bibitem{FalGawVerg} G. Falkovich, K. Gawedzki, M. Vergassola, Particles and
fields in fluid turbulence, \textit{Rev. Modern Phys.} \textbf{73} (2001),
no. 4, 913--975.

\bibitem{FedriFla} E. Fedrizzi, F. Flandoli, Noise prevents singularities in
linear transport equations, \textit{J. Funct. Anal.} \textbf{264} (2013),
no. 6, 1329--1354.

\bibitem{Fefferman} C. L. Fefferman, Existence and smoothness of the
Navier-Stokes equations, the millennium prize problems, Clay Math. Inst.,
Cambridge 2006, 57-67.

\bibitem{FlCime} F. Flandoli, \textit{An introduction to 3D stochastic fluid
dynamics}. SPDE in hydrodynamic: recent progress and prospects, 51--150,
Lecture Notes in Math., 1942, Springer, Berlin, 2008.

\bibitem{Fla Saint Flour} F. Flandoli, \textit{Random perturbation of PDEs
and fluid dynamic models}, Saint Flour Summer School Lectures, 2010, Lecture
Notes in Math., vol. 2015, Springer, Berlin, 2011.

\bibitem{FlaGubPri} F. Flandoli, M. Gubinelli, E. Priola, Well posedness of
the transport equation by stochastic perturbation,\textit{\ Invent. Math.} 
\textbf{180} (2010), 1-53.

\bibitem{FGP-Eulero} F. Flandoli, M. Gubinelli, E. Priola, Full
well-posedness of point vortex dynamics corresponding to stochastic 2D Euler
equations, \textit{Stochastic Process. Appl.} \textbf{121} (2011), no. 7,
1445--1463.

\bibitem{FR Markov} F. Flandoli, M. Romito, Markov selections for the 3D
stochastic Navier-Stokes equations, \textit{Probab. Theory Related Fields}
140 (2008), no. 3-4, 407--458.

\bibitem{GubiJara} M. Gubinelli, M. Jara, Regularization by noise and
stochastic Burgers equations, arXiv:1208.6551.

\bibitem{Gyongy} I. Gy\"{o}ngy, Existence and uniqueness results for
semilinear stochastic partial differential equations, \textit{Stochastic
Process. Appl.} 73 (1998), no. 2, 271--299.

\bibitem{KunSF} H. Kunita, \textit{Stochastic differential equations and
stochastic flows of diffeomorphisms}, Ecole d'\'{e}t\'{e} de probabilit\'{e}%
s de Saint-Flour, XII---1982, 143-303, Lecture Notes in Math. \textbf{1097},
Springer, Berlin, 1984.

\bibitem{Kun} H. Kunita, \textit{Stochastic Flows and Stochastic
Differential Equations}, Cambridge Studies in Advanced Mathematics, vol. 24.
Cambridge Univ. Press, Cambridge (1990).

\bibitem{Maure} M. Maurelli, Wiener chaos and uniqueness for stochastic
transport equation, \textit{C. R. Math. Acad. Sci. Paris} \textbf{349}
(2011), no. 11-12, 669--672.

\bibitem{MohNilPro} S.-E. A. Mohammed, T. Nilssen, F. Proske, Sobolev
Differentiable Stochastic Flows of SDE`s with Measurable Drift and
Applications, arXiv:1204.3867.

\bibitem{NevesOliv} W. Neves, C. Olivera, Wellposedness for stochastic
continuity equations with Ladyzhenskaya-Prodi-Serrin condition,
arXiv:1307.6484.

\bibitem{Romito} R. Romito, Analysis of equilibrium states of Markov
solutions to the 3D Navier-Stokes equations driven by additive noise, 
\textit{J. Stat. Phys.} \textbf{131} (2008), no. 3, 415-444.

\bibitem{V} Y. A. Veretennikov, On strong solution and explicit formulas for
solutions of stochastic integral equations, \textit{Math. USSR Sb.} \textbf{%
39}, 387-403 (1981).

\bibitem{Zambotti} L. Zambotti, An analytic approach to existence and
uniqueness for martingale problems in infinite dimensions, \textit{Probab.
Theory Related Fields} \textbf{118} (2000), no. 2, 147--168.
\end{thebibliography}
\end{document}